\newtheorem{prop}{Proposition}[section]
\newtheorem{rem}{Remark}[section]
\newtheorem{lema}{Lemma}[section]
\newtheorem{defi}{Definition}[section]
\newtheorem{teo}{Theorem}[section]
\newtheorem{eje}{Example}[section]
\def\R{{\mathbb R}}
\def\N{{\mathbb N}}
\def\Z{{\mathbb Z}}
\def\M{{\mathcal M}}
\title[Thermodynamic formalism for the positive geodesic flow]{Thermodynamic formalism for the positive geodesic flow on the modular surface}
\date{\today}
\author{Godofredo Iommi} \address{Facultad de Matem\'aticas,
Pontificia Universidad Cat\'olica de Chile (PUC), Avenida Vicu\~na Mackenna 4860, Santiago, Chile}
\email{giommi@mat.puc.cl}
\urladdr{http://www.mat.puc.cl/\textasciitilde giommi/}
\begin{document}

\begin{abstract}
In this note we study the thermodynamic formalism for the positive geodesic flow on the modular surface. We define the pressure and prove  the variational principle. We also establish conditions for the the pressure to be real analytic and for the potentials to have unique equilibrium states. The results in this paper were largely superceded by \cite{ij}.\end{abstract}

\maketitle

\section{Introduction}
This note is devoted to study ergodic properties of the  positive geodesic flow on the modular surface. The results presented here were  superceded by  the ones obtained by Iommi and Jordan in \cite{ij}. This flow was introduced by Svetlana Katok and  Boris Gurevich in \cite{gk} building up on previous work by Katok \cite{k}. It is an interesting flow because it has strong symbolic properties, as we will see at the end of this section. Our main purpose
is to develop a thermodynamic formalism for this flow. We define the pressure, prove a variational principle and establish sufficient conditions for a potential to have equilibrium measures (see Section \ref{termo}). This provides not only a strong set of tools to further study this flow, but it is has interest on its own right. Indeed, since the flow is defined on a non-compact set usual techniques do not hold. We overcome this major obstruction combining a symbolic representation of this flow obtained by Gurevich and Katok \cite{gk}
and a thermodynamic formalism for suspension flow over countable Markov shifts developed by Barreira and Iommi in \cite{BI}.

Let us begin with some basic definitions (for more details we refer to the book \cite{ka} and the article \cite{ku}). Denote by $\mathcal{H}=\{z \in \mathbb{C}: \text{Im }  z >0  \}$ the upper half-plane endowed with the hyperbolic metric. Geodesics in $\mathcal{H}$ are either semi-circles which meet the boundary perpendicularly or vertical straight lines.
The geodesic flow of $\mathcal{H}$, denote by $\{\overline{\psi}_t\}$, is the flow on the unit tangent bundle,
 $T^1\mathcal{H}$, of $\mathcal{H}$ which moves $\omega \in T^1\mathcal{H}$ along the geodesic it determines at unit speed.

The group of M\"obius transformations acting on $\mathcal{H}$ by orientation preserving isometries can be identified with the group $ \text{PSL}(2, \mathbb{Z})$. The \emph{modular surface} is defined by $M=\text{PSL}(2, \mathbb{Z})  \backslash  \mathcal{H}$, which is a (non-compact) surface of  constant negative curvature. Topologically, is a sphere with one cusp and two singularities. Note that $T^1\mathcal{H}$ can be identified with $\text{PSL}(2, \mathbb{R})$ by sending $\omega=(z, \zeta) \in T^1\mathcal{H} $ onto the unique $ g \in  \text{PSL}(2, \mathbb{R})$ such that $z=g(i)$ and $\zeta=g'(z)(\iota)$, where $\iota$ is the unit vector at the point $i$ to the imaginary axis pointing up. In this coordinate system the geodesic flow takes the form
\begin{equation*}
\overline{\psi}_t
\left(  \begin{array}{cc}   a & b \\ c & d \end{array} \right) = \left(  \begin{array}{cc}   a & b \\ c & d \end{array} \right) \left(  \begin{array}{cc}   e^{t/2} & 0 \\ 0 & e^{-t/2} \end{array} \right).\end{equation*}
The geodesic flow  $\{\overline{\psi}_t\}$ on $\mathcal{H}$ descends to the geodesic flow
 $\{\psi_t\}$ on $M$ via the projection $\pi:T^1\mathcal{H} \mapsto T^{1}M$. We will be interested in an invariant sub-system of  $\{\psi_t\}$. In order to define it we need to
 consider the fundamental region for $\text{PSL}(2, \mathbb{Z})$ given by
\begin{equation*}
F=\{z \in \mathcal{H} : |z| \geq 1, | \text{Re } z | \leq 1/2 \},
\end{equation*}
whose sides are identified by the generators of  $\text{PSL}(2, \mathbb{Z})$, $T(z)=z+1$ and $S(z)=-1/z$ (see \cite[p.55]{ka}). Any geodesic can be represented by a series of segments in $F$. We will only be interested in oriented geodesic that do not go to the cusps of $M$ in either direction.

\begin{defi}
A geodesic $\gamma$ is called \emph{positive} if  all segments comprising the geodesic $\gamma$  in F are positively (clockwise) oriented. The set of vectors in $T^1M$ tangent to positive geodesics is a  non-compact invariant set of the geodesic flow on $T^1M$. We call the restriction of  the geodesic flow to this set the \emph{positive geodesic flow}.
\end{defi}

Positive geodesic have  interesting coding  properties.  There are several ways to represent geodesics by symbolic sequences, for a detailed exposition on the subject see \cite{ku}. Here we will be interested in two if them:

\subsection{The geometric code}

As we just saw, the sides of $F$ are identified with the generators of $\text{PSL}(2, \mathbb{Z})$,
$T(z)= z+1$ and $S(z)= -1/z$. The \emph{geometric code} (also known as Morse code) of a geodesic in $F$ (that does not go to the cusp in either direction)
is a bi-infinite sequence of integers. The idea is to code the geodesic by recording the sides of the region $F$ that are cutted by the geodesic.
The boundary of $F$ has three sides, the left and right vertical sides are labeled by $T$ and $T^{-1}$, respectively. The circular boundary is labeled by $S$.  Any oriented geodesic, that does not go to the cusp, returns to the circular boudary of $F$ infinitely often. The geometric code is obtained as follows. Choose an initial point on the circular boundary of $F$ and count the number of times it hits the vertical boundary of $F$ moving  in the direction of the geodesic. We assign a positive integer to each block of hits of the right vertical side and a negative number to the left vertical side. if we move the initial point in the opposite direction we obtain a sequence of nonzero integers
\[ [\gamma]=[ \dots ,n_{-1}, n_0, n_1, \dots ] \]
that we denote \emph{geometric code}.

\subsection{The arithmetic code}
An oriented  geodesic $\gamma \in \mathcal{H}$ is called \emph{reduced} if its endpoints
$u,v$ satisfiy $0<u<1$ and $w>1$. Recall that  a geodesic which does not go to the cusp in either direction is such that its end points are irrationals.  Consider the minus continued fraction associated to $u$ and $v$,
\begin{equation*}
w= \textrm{ } n_1 +\cfrac{1}{n_2 - \cfrac{1}{n_3 - \cfrac{1}{n_4 - \dots}}} \text{ , }  \frac{1}{u} = \textrm{ } n_0 -\cfrac{1}{n_{-1} - \cfrac{1}{n_{-2} - \cfrac{1}{n_{-3} - \dots}}} \end{equation*}
The following code is given to $\gamma$,
\[(\gamma)= ( \dots, n_{-2}, n_{-1}, n_{0}, n_1, n_2, \dots).\]
Now, it is possible to show that an arbitrary geodesic $\gamma \in M$ can also be represented by doubly infinite sequence. The idea is to construct a cross section  for the geodesic flow on $T^1M$. Every oriented geodesic can be represented as a bi-infnite sequence of segments $\sigma_i$ between returns to the cross section. To each segment  it corresponds a reduced geodesic $\gamma_i$. It turns out that (see \cite{gk}) all these geodesics have the same arithmetic code, except for a shift. This sequence is the \emph{arithmetic code} of $\gamma$,
\[(\gamma)= (\dots, , n_{-2}, n_{-1}, n_{0}, n_1, n_2, \dots).\]

\subsection{Equality of codes}

Positive geodesic behave very well with respect to the arithmetic and geometric codes. Indeed, Gurevich and Katok \cite{gk} proved the following result which is a generalisation of earlier results by Katok \cite{k},

\begin{teo}[Gurevich-Katok] A geodesic $\gamma$ is positive if and only if the arithmetic and the geometric codes for $\gamma$ coincide.
\end{teo}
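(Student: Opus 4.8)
The plan is to reduce the statement to the comparison of two explicit algorithms attached to the endpoints of $\gamma$ (which we may assume irrational, since $\gamma$ does not go to the cusp, so that both codes are genuine bi-infinite sequences): the \emph{geometric reduction}, which records the successive sides of $F$ met by $\gamma$ and thus yields the geometric code $[\gamma]$, and the \emph{arithmetic reduction}, which yields the arithmetic code $(\gamma)$ from the minus continued fraction expansions of the endpoints. In group-theoretic language the geometric code encodes the word in the generators $T^{\pm1}$, $S$ carrying one copy of $F$ to the next along $\gamma$, whereas the arithmetic code encodes the first-return map of the geodesic flow to the Gurevich--Katok cross section made of reduced geodesics, whose action on endpoints is precisely one step of the minus continued fraction (Gauss-type) map. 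First I would recall from \cite{gk, k} the exact form of both maps, so that the whole problem becomes the assertion that these two maps agree, as maps on symbol sequences, exactly on positive geodesics.

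For the forward implication, assume $\gamma$ is positive and consider one excursion of $\gamma$ inside $F$ between two consecutive returns to the circular boundary $|z|=1$. Since this segment is clockwise oriented it can only leave through the right vertical side $\mathrm{Re}\, z = 1/2$, and it does so some number $n \ge 1$ of times before meeting the circular side; using $T(z)=z+1$ and $S(z)=-1/z$ one checks that the associated group element is $T^{n} S$ with $n \ge 1$. On the other hand, for a reduced geodesic with endpoints $0<u<1<v$ one step of the arithmetic reduction conjugates by exactly this element $T^{n} S$, with the same $n$ appearing as the relevant partial quotient of the minus continued fraction of $v$. Hence on positive geodesics the two algorithms are literally the same map; since a positive geodesic is, up to a shift of the index, reduced, one can align the two base points so that the indexings match, and therefore $[\gamma] = (\gamma)$.

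For the converse, suppose $[\gamma] = (\gamma)$. The arithmetic code is, by construction, assembled from genuinely reduced geodesics $\gamma_i$, each of which --- having endpoints $0<u<1<v$ --- traverses $F$ in the clockwise sense; equivalently, the digits produced by the arithmetic reduction never form the blocks that a counter-clockwise segment would necessarily force into a geometric code. Therefore, if $[\gamma]$ coincides with such a sequence, every segment of $\gamma$ in $F$ must be clockwise oriented, i.e.\ $\gamma$ is positive. The same statement read contrapositively is perhaps cleaner: if $\gamma$ has a counter-clockwise segment, then computing its arithmetic code requires a nontrivial ``carry'' --- the minus continued fraction expansion of the relevant endpoint differs from the digit string read off naively from the geometry --- so $[\gamma] \ne (\gamma)$.

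I expect the main obstacle to be the precise dictionary between the word in $T^{\pm1}, S$, the geometry of the segments of $\gamma$ in $F$, and the minus-continued-fraction digits, and in particular a clean quantitative form of the ``carrying'' phenomenon: exactly how a single counter-clockwise segment forces a discrepancy between the two digit strings. One must also deal carefully with geodesics passing near the elliptic fixed points and the corners of $F$, and with matching the two cross sections and their first-return maps so that the bi-infinite indexings of $[\gamma]$ and $(\gamma)$ line up; these are precisely the points where the schematic argument above needs to be supported by the explicit formulas and the case analysis of \cite{gk, k}.
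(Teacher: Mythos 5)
The paper does not prove this theorem: it is quoted from Gurevich and Katok \cite{gk} (building on \cite{k}) and used only as a black box for the symbolic model, so there is no internal proof to compare against; your sketch can only be measured against what the cited argument must actually contain.

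Your framework --- reducing the statement to a direct comparison of the geometric (cutting-sequence) and arithmetic (minus-continued-fraction, first-return) reduction algorithms --- is the correct one, but the outline does not carry either implication to completion. For the forward direction you assert that a clockwise excursion in $F$ yields $T^{n}S$ with $n\ge 1$, with $n$ equalling the corresponding minus-continued-fraction digit. However, the common alphabet of the two codes for positive geodesics is $\{3,4,5,\dots\}$ with the forbidden pairs $(3,3),(3,4),(3,5),(4,3),(5,3)$; showing that a positive geodesic never produces a digit $1$ or $2$, that the geometric and arithmetic digits actually agree segment by segment (rather than merely counting the same thing up to ``carrying''), and that the five Platonic pairs are excluded, is precisely the non-trivial geometric content --- the behaviour near the corners of $F$ and the elliptic fixed points --- and none of it is addressed. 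The converse as you state it is essentially a restatement: the assertion that a counter-clockwise segment ``necessarily forces'' a discrepancy between the two digit strings \emph{is} the theorem, not an argument for it. You acknowledge this yourself in the closing paragraph --- the ``main obstacle'' you list is exactly the body of the proof in \cite{gk,k} --- so what you have is a reasonable roadmap rather than a proof.
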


In this note we  study the thermodynamic formalism for the positive geodesic flow. These are ideas and techniques that come originally  from statistical mechanics and were brought into dynamical systems by Ruelle and Sinai in the early seventies. This formalism  provides procedures for the choice of invariant measures. Let us stress that the positive geodesic flow has many invariant measures, hence the problem of choosing relevant ones.  A good understanding of the thermodynamic formalism could allow us, for instance,  to develop a dimension theory (e.g multifractal formalism, Bowen formula)  for this flow, see \cite{pe} for details.

Thermodynamic formalism was studied for the geodesic flow on compact negatively curved manifolds (more generally for Axiom A flows) by Bowen and Ruelle \cite{br}. They proved that, in that context, the geodesic flow can be coded by a suspension flow over a sub-shift of finite type defined over a finite alphabet (this is done using the Markov partitions obtained by Bowen \cite{bo1} and Ratner \cite{ra}). This relation allowed them to reduce the study of the geodesic flow to  a suspension flow. They proved existence and uniqueness of equilibrium measures (see Section \ref{termo} for precise definitions)  for a wide range of potentials.

It was shown by Gurevich and Katok (see Section \ref{susp} and \cite{gk}) that the positive geodesic flow in the modular surface can be coded by a suspension flow over a countable Markov shift. The fact that the model is not compact is a major obstruction if ones wants to apply the results by Bowen and Ruelle. Nevertheless, thermodynamic formalism for suspension flows over countable Markov shifts was developed and studied by Barreira and Iommi in \cite{BI}. In this note, making use of the results in \cite{gk} and \cite{BI}, we provide a definition of pressure for the positive geodesic flow. Moreover, we obtain a variational principle and establish conditions under which a large class of potentials have equilibrium measures.

\section{Preliminaries from Ergodic theory}
In this section we review results form ergodic theory that will be used in the rest of the paper. First we recall the definition of pressure for countable Markov shifts and some of its properties. We then recall the suspension flow construction and study the relation between the invariant measures for the shift and the invariant measures for the suspension flow.  This relation will be essential in what follows because it allow us to relate the study of the thermodynamic formalism for the flow with the study of the thermodynamic formalism for the shift.

\subsection{Thermodynamic formalism for countable Markov shifts} \label{cms}

Let $B$ be a transition matrix defined on the alphabet of natural numbers. That is, the entries of the matrix
$B=B(i,j)_{\N \cup \left\{0\right\}  \times \N \cup \{0\} }$  are zeros and ones (with no row and no column
made entirely of zeros). The countable Markov shift $(\Sigma_B, \sigma)$
is the set
\[ \Sigma_B:= \left\{ (x_n)_{n \in \N \cup \{0\}} : B(x_n, x_{n+1})=1  \text{ for every } n \in \N  \cup \{0\} \right\}, \]
together with the shift map $\sigma: \Sigma_B \to \Sigma_B$ defined by
$\sigma(x_0, x_1,x_2,x_3 \dots)=(x_1, x_2,x_3,\dots)$.

\begin{rem}
Analogously, we can define a two-sided countable Markov shift by
\[ \Sigma_B^*:= \left\{ (x_n)_{n \in \Z} : B(x_n, x_{n+1})=1  \text{ for every } n \in \Z \right\}, \]
together with the shift map  $\sigma: \Sigma_B^* \to \Sigma_B^*$ defined by $(\sigma x)_n=x_{n+1}$.
\end{rem}

Recall that the space $\Sigma_B$ is equipped with
the topology generated by the cylinder sets
\begin{equation*}
C_{a_0 \cdots a_n}= \{ x\in \Sigma_B: x_i=a_i \ \text{for
$i=0,\ldots,n$}\}.
\end{equation*}
Given a function $ \rho \colon \Sigma_B \to \R$ we define
\[ V_{n}(\rho):= \sup \{| \rho(x)- \rho(y)| : x,y\in \Sigma_B, \ x_{i}=y_{i}
\ \text{for $i=0,\ldots,n-1$} \},
\]
where $x=(x_0 x_1 \cdots)$ and $y=(y_0y_1 \cdots)$. We say that
$\rho$ is \emph{locally H\"older} if there exist constants $K>0$ and
$\theta\in (0,1)$ such that $V_{n}( \rho) \le K \theta^{n}$ for all
$n\in \N$. Note that since nothing is required for $n=0$ a locally
H\"older function is not necessarily bounded.

There are essentially two notions of (topological) pressure for a
countable Markov shift. The first was proposed by Mauldin and Urba\'nski \cite{mu} and the second by Sarig \cite{sa1}. For the class of Markov shifts that we will be interested in this note both notions coincide.

\begin{defi} \label{presion}
Let $\rho \colon \Sigma_B \to \R$ be a locally H\"older function. The
\emph{pressure} of $\rho$  is defined by
\[
 P_\sigma(\rho) = \lim_{n \to
\infty} \frac{1}{n} \log \sum_{x:\sigma^{n}x=x} \exp \left(
\sum_{i=0}^{n-1} \rho(\sigma^{i}x)\right).
\]
 \end{defi}

\begin{rem} \label{formula}
Let $(\Sigma_F, \sigma)$ be the full-shift on countably many symbols, that is
\[ \Sigma_F:= \left\{ (x_n)_{n \in \N\cup \{0\}} : x_n \in \N \cup \{0\}\right\}.\]
If $\rho :\Sigma_F \to \R$ is a locally constant potential, that is
$\rho|C_n:= \log \lambda_n$, then there is a simple formula for the pressure (see, for example, \cite[Example 1]{BI})
\begin{equation}
P_{\sigma}(\rho )= \log \sum_{n=0}^{\infty} \lambda_n.
\end{equation}
\end{rem}
This notion of pressure satisfies the variational principle (see \cite{mu,sa1}),
\begin{teo}
Let $(\Sigma_B, \sigma)$ be a countable Markov shift and $\rho \colon \Sigma_B \to \R$ be a locally H\"older function, then
\[P_{\sigma}(\rho)= \sup \left\{ h(\nu) + \int \rho \ d \nu : \nu \in \M_{\sigma} \text{ and } - \int \rho \ d \nu < \infty \right\},\]
where $ \M_{\sigma} $ denotes the set of $\sigma-$invariant probability measures and $h(\nu)$ denotes the entropy of the measure $\nu$ (for a precise definition see  \cite[Chapter 4]{wa}).
\end{teo}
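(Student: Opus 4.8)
The plan is to prove the two inequalities separately, reducing each to the classical variational principle on compact subshifts. First I would fix an increasing sequence of finite subalphabets $\Lambda_1\subset\Lambda_2\subset\cdots$ with $\bigcup_k\Lambda_k=\N\cup\{0\}$, chosen so that each $\Sigma_{\Lambda_k}:=\{x\in\Sigma_B:x_n\in\Lambda_k\text{ for all }n\}$ is a transitive subshift of finite type; such a sequence exists when $(\Sigma_B,\sigma)$ is transitive, and the general case follows by first decomposing $\Sigma_B$ into transitive components. Each $\Sigma_{\Lambda_k}$ is compact and $\rho$ restricts to it as a bounded continuous function, so the classical variational principle (\cite[Ch.~9]{wa}) holds there, and I also recall that on a transitive finite-state subshift of finite type the topological pressure equals the periodic-orbit growth rate, $P_{\mathrm{top}}(\rho|_{\Sigma_{\Lambda_k}})=\lim_n\frac1n\log\sum\{e^{S_n\rho(x)}:\sigma^nx=x,\ x\in\Sigma_{\Lambda_k},\ x_0=a\}$ for any $a\in\Lambda_k$.

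For the inequality $P_\sigma(\rho)\le\sup\{h(\nu)+\int\rho\,d\nu\}$: the partition functions $Z_n^a(\rho):=\sum\{e^{S_n\rho(x)}:\sigma^nx=x,\ x_0=a\}$ are supermultiplicative (concatenate loops based at $a$), and for a topologically mixing shift $P_\sigma(\rho)$ coincides with the Gurevich pressure $\lim_n\frac1n\log Z_n^a(\rho)=\sup_n\frac1n\log Z_n^a(\rho)$; moreover, since any periodic orbit uses only finitely many symbols, $Z_n^a(\rho)=\sup_k Z_n^a(\rho|_{\Sigma_{\Lambda_k}})$ with the right side increasing in $k$. Interchanging the suprema over $n$ and $k$ yields $P_\sigma(\rho)=\sup_k P_{\mathrm{top}}(\rho|_{\Sigma_{\Lambda_k}})$. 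Applying the classical variational principle on each $\Sigma_{\Lambda_k}$, and observing that a measure carried by some $\Sigma_{\Lambda_k}$ integrates the bounded function $\rho|_{\Sigma_{\Lambda_k}}$ and so satisfies $-\int\rho\,d\mu<\infty$, the right-hand side is bounded above by $\sup\{h(\nu)+\int\rho\,d\nu:\nu\in\M_\sigma,\ -\int\rho\,d\nu<\infty\}$.

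For the reverse inequality I must show $h(\nu)+\int\rho\,d\nu\le P_\sigma(\rho)$ for all $\nu\in\M_\sigma$ with $-\int\rho\,d\nu<\infty$. Since $h$ and $\nu\mapsto\int\rho\,d\nu$ are affine and $\int\rho^-\,d\nu<\infty$ passes to almost every ergodic component, the ergodic decomposition reduces this to ergodic $\nu$; and one may assume $\rho\in L^1(\nu)$, since if $\int\rho^+\,d\nu=\infty$ the argument below applied to the truncations $\min(\rho,M)$ already gives $P_\sigma(\rho)=\infty$. The crux is then the following approximation statement: for every $\varepsilon>0$ there are a finite subalphabet $\Lambda$ and $\mu\in\M_\sigma$ with $\mu(\Sigma_\Lambda)=1$ and $h(\mu)+\int\rho\,d\mu\ge h(\nu)+\int\rho\,d\nu-\varepsilon$. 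Granting it, the identity $P_\sigma(\rho)=\sup_k P_{\mathrm{top}}(\rho|_{\Sigma_{\Lambda_k}})$ together with the variational principle on $\Sigma_\Lambda$ completes the proof. To prove the statement I would choose $M$ large so that $\nu([\{0,\dots,M\}])$ is close to $1$ and $\int_{\Sigma_B\setminus[\{0,\dots,M\}]}(1+\rho^-)\,d\nu$ is small, then use the Shannon--McMillan--Breiman theorem and Birkhoff's theorem (for $\rho$ and for $\mathbf 1_{[\{0,\dots,M\}]}$) to exhibit, for all large $n$, about $e^{n h(\nu)}$ cylinders of length $n$ on each of which $S_n\rho\ge n(\int\rho\,d\nu-\delta)$ and whose orbits spend all but an $O(\delta)$-fraction of the first $n$ coordinates inside $\{0,\dots,M\}$. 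Because long excursions out of $\{0,\dots,M\}$, and excursions through very large symbols, are rare (finite expected return time), all but a negligible proportion of these cylinders recode---by collapsing each short excursion to one of finitely many new symbols---to words of a finite-alphabet subshift of finite type $\Sigma_\Lambda$; a $\sigma$-invariant measure $\mu$ on $\Sigma_\Lambda$ built from concatenations of these words (as in the proof of Katok's entropy formula) then satisfies $h(\mu)\ge h(\nu)-c\delta$ and $\int\rho\,d\mu\ge\int\rho\,d\nu-c\delta$, and letting $\delta\to0$ gives the statement. If $h(\nu)=\infty$ the same construction produces finite subsystems of arbitrarily large pressure.

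The main obstacle is this last approximation: one must recover the entropy \emph{and} the value of $\int\rho\,d\nu$ from a finite subsystem while $\rho$ is unbounded and $\Sigma_B$ non-compact, so there is no uniform way to close up cylinders inside the finite alphabet. The two delicate points are controlling the contribution of the large symbols --- both to $\int\rho\,d\mu$ and to the small family of discarded cylinders --- which is exactly where the hypothesis $-\int\rho\,d\nu<\infty$ enters, and checking that the recoded object is a bona fide finite-state subshift of finite type on which the first part of the argument applies. Everything else is routine bookkeeping on top of the compact theory of \cite{wa} and elementary properties of the Gurevich partition functions.
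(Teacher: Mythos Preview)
The paper does not supply its own proof of this theorem: it is stated as a known result and attributed to Mauldin--Urba\'nski \cite{mu} and Sarig \cite{sa1}. So there is no argument in the paper to compare your proposal against.

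That said, your outline follows the standard route used in those references. The identity $P_\sigma(\rho)=\sup_k P_{\mathrm{top}}(\rho|_{\Sigma_{\Lambda_k}})$ via exhaustion by finite subalphabets and periodic-orbit counting is exactly how the Gurevich/Sarig pressure is shown to dominate the compact pressures, and the easy inequality then follows from Walters' variational principle on each $\Sigma_{\Lambda_k}$. For the hard inequality your plan is essentially the one Sarig carries out: reduce to ergodic $\nu$, then approximate by a measure supported on a finite-alphabet subsystem while controlling both entropy and $\int\rho\,d\nu$. Two points deserve care if you want a complete proof rather than a sketch. First, the ``recoding by collapsing short excursions'' step is where most of the work lies; in Sarig's argument this is handled by an inducing/Kac construction rather than a Katok-type horseshoe, and the local H\"older hypothesis is used to control distortion of $S_n\rho$ along the approximating orbits. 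Second, your reduction to transitive components and to topologically mixing shifts should be made explicit, since Definition~\ref{presion} as written does not fix a base symbol and its independence of that choice uses transitivity. None of this is a gap in strategy, but it is precisely the content of the cited papers, so in the context of the present note the honest statement is that the theorem is quoted, not proved.
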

A measure $\nu \in \M_{\sigma}$ attaining the supremum, that is, $P_{\sigma}(\rho)= h(\nu) + \int \rho \ d \nu$ is called \emph{equilibrium measure} for $\rho$.

We say that $\mu \in  \M_\sigma$ is a \emph{Gibbs measure}  for the
function $\rho \colon \Sigma_B \to \R$ if for some constants $P$,
$C>0$ and every $n\in \N$ and $x\in C_{a_0 \cdots a_n}$ we have
\[
\frac{1}C \le \frac{\mu(C_{a_0\cdots a_n})}{\exp (-nP + \sum_{i=0}^n
\phi(\sigma^k x))} \le C.
\]

There is a class of countable Markov shifts, introduced by Sarig \cite{sa2}, that have simple combinatorics,  similar to that of the full-shift.  The thermodynamic formalism for Markov shifts belonging to this class is similar to the one of sub-shift of finite type defined on finite alphabets. We say that a countable Markov shift $(\Sigma_B, \sigma)$, defined by the transition matrix $B(i,j)$ with $(i,j)\in \N \cup\{0\} \times \N \cup\{0\}  $, satisfies the \emph{BIP property} if and only if  there exists $\{b_1 , \dots , b_n\}  \in \N \cup\{0\} $ such that for every $a \in \N \cup\{0\} $ there exists $i,j \in \N$ with $B(b_i, a)B(a,b_j)=1$.
The following theorem sumarises results  proven by Sarig in \cite{sa2, sa3} and by Mauldin and Urba\'nski \cite{mu},

\begin{teo}
Let $(\Sigma_B, \sigma)$ be a countable Markov shift satisfying the BIP property and $\rho: \Sigma_B \to \R$ a locally H\"older potential. Then, there exists $t^* >0$ such that pressure function $t \to P(t\rho)$ has the following properties
\begin{equation*}
P_{\sigma}(t \rho)=
\begin{cases}
\infty  & \text{ if  } t \leq t^* \\
\text{real analytic } & \text{ if  } t > t^*.
\end{cases}
\end{equation*}
Moreover, if $t> t^*$, there exists a unique equilibrium measure for $t \rho$.
If $\sum_{n=1}^{\infty} V_{n}( \rho) < \infty$ and $P_{\sigma}(\rho)< \infty$ then there exists a Gibbs measure for $\rho$.
\end{teo}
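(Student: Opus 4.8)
The plan is to deduce all three assertions from the Ruelle--Perron--Frobenius (RPF) theorem for countable Markov shifts with the BIP property, as developed by Sarig \cite{sa2,sa3} and Mauldin--Urba\'nski \cite{mu}. The central object is the transfer (Ruelle) operator attached to a locally H\"older potential $\varphi$ with $P_\sigma(\varphi)<\infty$,
\[
(L_\varphi f)(x)=\sum_{\sigma y=x}e^{\varphi(y)}f(y),
\]
acting on a suitable Banach space of bounded locally H\"older functions. The BIP property is precisely what is needed to run a Perron--Frobenius argument despite non-compactness: the finitely many ``base'' symbols guarantee a uniform mixing/tower structure, so that whenever $P_\sigma(\varphi)<\infty$ the operator $L_\varphi$ has a simple, isolated maximal eigenvalue $\lambda=e^{P_\sigma(\varphi)}$, a strictly positive eigenfunction $h$ (bounded above and bounded away from zero), and a conformal probability measure $\nu$ with $L_\varphi^*\nu=\lambda\nu$; the normalized product $\mu_\varphi=h\,\nu$ is then a $\sigma$-invariant Gibbs measure for $\varphi$.

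For the dichotomy of $t\mapsto P_\sigma(t\rho)$, I would first use the BIP property to compute the Gurevich pressure from the partition functions $Z_n(t\rho)$ restricted to a cylinder on a symbol of the finite BIP set; applying H\"older's inequality to these sums shows $t\mapsto P_\sigma(t\rho)$ is convex, so $\{t:P_\sigma(t\rho)<\infty\}$ is an interval, and I take $t^*$ to be its left endpoint, so that finiteness fails for $t<t^*$. That $t^*>0$ (and that the pressure is still infinite at $t^*$ itself in the relevant cases) uses the specific features of $\rho$ --- typically $P_\sigma(\rho)=+\infty$ --- together with the characterization, valid under summable variations, that $P_\sigma(\varphi)<\infty$ if and only if the first partition function $\sum_{\sigma y=y}e^{\varphi(y)}$, equivalently $\sum_a\exp(\sup_{C_a}\varphi)$, is finite. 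On $(t^*,\infty)$, real analyticity follows from analytic perturbation theory: one checks that $t\mapsto L_{t\rho}$ extends to a holomorphic family of bounded operators on the Banach space --- expanding $e^{t'\rho}=e^{t\rho}\sum_k\frac{((t'-t)\rho)^k}{k!}$ and estimating $\|L_{t\rho}(\rho^k\,\cdot)\|$, a series that converges exactly because the pressure is finite on a whole neighborhood of $t$ --- and then the simple isolated eigenvalue $\lambda(t)$ depends real-analytically on $t$ by Kato's theory. Since $P_\sigma(t\rho)=\log\lambda(t)$, analyticity of the pressure follows.

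The statements on equilibrium and Gibbs measures are then read off from the same construction. For $t>t^*$ the measure $\mu_t=h\,\nu$ produced above is the Gibbs measure for $t\rho$; a direct computation, using that $\log h$ is bounded and invoking the variational principle already recorded, gives $h(\mu_t)+\int t\rho\,d\mu_t=P_\sigma(t\rho)$, so $\mu_t$ is an equilibrium state. Uniqueness comes from the fact that any equilibrium state must share the conformality/Radon--Nikodym properties that single out $h\,\nu$ --- here one invokes the uniqueness clause of the RPF theorem (the conformal measure is unique). The final assertion is the special case $\varphi=\rho$ with $P_\sigma(\rho)<\infty$; note that the hypothesis $\sum_nV_n(\rho)<\infty$ is automatic, since a locally H\"older potential has exponentially decaying variations, so the only genuine requirement there is $P_\sigma(\rho)<\infty$, under which the RPF construction directly produces the Gibbs measure.

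I expect the main obstacle to be the spectral analysis of $L_{t\rho}$ in the non-compact setting: proving quasi-compactness (a spectral gap) for a normalized transfer operator over a BIP shift, and, for the analyticity statement, controlling the unbounded potential $\rho$ well enough that $t\mapsto L_{t\rho}$ is genuinely analytic in operator norm on a neighborhood of each $t>t^*$. These are exactly the places where the BIP property and the finiteness of the Gurevich pressure on an open set of parameters are essential; the remaining steps are bookkeeping built on top of the variational principle and the periodic-orbit formula for the pressure already stated above.
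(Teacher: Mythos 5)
The paper does not prove this theorem. It is stated immediately after the sentence ``The following theorem sumarises results proven by Sarig in \cite{sa2, sa3} and by Mauldin and Urba\'nski \cite{mu}'' and is recorded purely as a citation, so there is no internal argument to compare against.

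That said, your outline does correctly identify the mechanism underlying those references: the Ruelle transfer operator $L_{t\rho}$, the role of the BIP property in producing a simple isolated maximal eigenvalue (hence a positive eigenfunction $h$, a conformal measure $\nu$, and the Gibbs measure $h\,d\nu$) despite noncompactness of the alphabet, convexity of $t\mapsto P_\sigma(t\rho)$ via H\"older's inequality so that the finiteness locus is an interval, and Kato analytic perturbation theory for real analyticity of the leading eigenvalue. Your observation that $\sum_{n}V_n(\rho)<\infty$ is automatic under the paper's locally-H\"older convention (which requires $V_n(\rho)\le K\theta^n$ for all $n\ge 1$) is also correct; the hypothesis is retained because other authors only demand exponential decay of $V_n$ for $n\ge 2$, where summability from $n=1$ is an additional requirement.

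Two caveats. First, your text is a research plan that explicitly defers the hardest steps --- establishing quasi-compactness of $L_{t\rho}$ on a suitable Banach space of locally H\"older functions for a BIP shift, and analyticity of the operator family $t\mapsto L_{t\rho}$ in operator norm when $\rho$ is unbounded; these are precisely the content of \cite{sa2,sa3,mu} and would need to be carried out or invoked verbatim to turn the sketch into a proof. Second, the clause ``infinite if $t\le t^*$'' in the statement is slightly imprecise: the finiteness set $\{t:P_\sigma(t\rho)<\infty\}$ need not be open at its left endpoint. For instance, on the full shift with $\rho|C_n=-\log n-2\log\log n$ one has $t^*=1$ yet $P_\sigma(t^*\rho)=\log\sum_n \bigl(n(\log n)^2\bigr)^{-1}<\infty$. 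Your hedge ``in the relevant cases'' is therefore the right reading; the dichotomy is literally correct only as $t<t^*$ versus $t>t^*$.
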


\subsection{Suspension flows and invariant measures}

Let $(\Sigma_B, \sigma)$ be a countable Markov shift and $\tau \colon \Sigma_B \to \R^+$ be a positive continuous function. Consider the space
\begin{equation}\label{shift}
Y= \{ (x,t)\in \Sigma_B \times \R \colon 0 \le t \le\tau(x)\},
\end{equation}
with the points $(x,\tau(x))$ and $(\sigma(x),0)$ identified for
each $x\in \Sigma_B$. The \emph{suspension semiflow} over $\sigma$
with \emph{height function} $\tau$ is the semiflow $\Phi = (
\varphi_t)_{t \ge 0}$ on $Y$ defined by
\[
 \varphi_t(x,s)= (x,
s+t) \ \text{whenever $s+t\in[0,\tau(x)]$.}
\]
In the case of two-sided Markov shifts we can define a suspension
flow $(\varphi_t)_{t\in \R}$ in a similar manner.

We denote by $\M_\Phi$ the space of $\Phi$-invariant probability
measures on $Y$. Recall that a measure $\mu$ on $Y$ is
\emph{$\Phi$-invariant} if $\mu(\varphi_t^{-1}A)= \mu(A)$ for every
$t \ge 0$ and every measurable set $A \subset Y$. We also consider
the space $\M_\sigma$ of $\sigma$-invariant probability measures on
$\Sigma_B$.  There is a strong relation between this two spaces of invariant measures.
Consider the space of $\sigma-$invariant measures for which $\tau$ is integrable,\begin{equation}
\M_\sigma(\tau):= \left\{ \mu \in \mathcal{M}_{\sigma}: \int \tau \ d \mu < \infty \right\}.
\end{equation}
Denote by $m$ the one dimensional Lebesgue measure and let $\mu \in \M_\sigma(\tau)$ then  it follows directly from classical results by Ambrose and Kakutani \cite{ak} that
\[(\mu \times m)|_{Y} /(\mu \times m)(Y) \in \M_{\Phi}.\]
The  behaviour of the map  $R \colon \M_\sigma \to \M_\Phi$, defined by
\begin{equation} \label{R}
R(\mu)=(\mu \times m)|_{Y} /(\mu \times m)(Y)
\end{equation}
 is closely related to the ergodic properties of the flow. Indeed, in the compact setting the map $R \colon \M_\sigma \to \M_\Phi$ is a bijection. This fact was used by Bowen and Ruelle \cite{br} to study and develop the thermodynamic formalism for Axiom A flows (these flows admit a compact symbolic representation). In
the general (non-compact) setting there are several difficulties that can arise. For instance,  the height function $\tau$ need not to be bounded above. It is, therefore, possible for a measure $\nu \in \M_\sigma$ to be such that $\int \tau \ d \nu  = \infty$. In this situation the measure $\nu \times m$ is an infinite sigma-invariant measure for $\Phi$. Hence, the map $R(\cdot)$ is not well defined and this makes it harder to reduce the study of the thermodynamic formalism of the flow $\Phi$ to the one of the shift $\sigma$. Another possible complication occurs if the height function $\tau$ is not bounded away from zero. Then it is possible that for an infinite (sigma-finite) invariant measure $\nu$  we have $\int  \tau\,d\nu <\infty$. In this case the measure $(\nu \times m)|_{Y} /(\nu \times m)(Y) \in \M_\Phi$. In such a situation,   the map $R$ is not surjective. Again, the fact that $R$ is not a bijective map makes it hard to translate problems from the flow to the shift.

The following can be obtained  directly from the results by Ambrose and Kakutani \cite{ak},
\begin{lema}
 If $\tau: \Sigma_B \to \R$ is bounded away from zero, the map $R \colon
\M_\sigma(\tau) \to \M_\Phi$  defined by
\begin{equation*}
R(\mu)=(\mu \times m)|_{Y} /(\mu \times m)(Y)
\end{equation*}
  is  bijective.
\end{lema}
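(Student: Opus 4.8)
The plan is to reduce the statement to the classical suspension/cross-section correspondence of Ambrose and Kakutani \cite{ak}, the point being that the hypothesis $\tau \ge c > 0$ (say $\tau \ge c$ everywhere) eliminates precisely the two obstructions described just before the statement. First, $R$ is well defined on $\M_\sigma(\tau)$: for $\mu \in \M_\sigma(\tau)$ one has $(\mu \times m)(Y) = \int \tau \, d\mu$, which is finite by the definition of $\M_\sigma(\tau)$ and satisfies $\int \tau \, d\mu \ge c\, \mu(\Sigma_B) = c > 0$ because $\mu$ is a probability measure; hence $R(\mu)$ is a genuine probability measure, and it is $\Phi$-invariant by \cite{ak}. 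The same inequality, applied to an arbitrary $\sigma$-invariant measure, shows that no infinite $\sigma$-invariant measure can have finite $\tau$-integral, which is why the second pathology cannot occur here.

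For injectivity I would recover $\mu$ from $\nu := R(\mu)$ by restricting to the bottom slab. Since $\tau \ge c$, the set $Z := \Sigma_B \times [0,c)$ is embedded in $Y$ with no identifications, so $\nu|_Z = (\mu \times m)|_Z / \int \tau \, d\mu$; explicitly, for every Borel $A \subseteq \Sigma_B$,
\[
\nu\big(A \times [0,c)\big) = \frac{c\,\mu(A)}{\int \tau \, d\mu}.
\]
Taking $A = \Sigma_B$ recovers $\int \tau \, d\mu = c/\nu(Z)$ from $\nu$ alone, and then the displayed identity recovers $\mu(A)$ for every $A$. Thus $R$ is injective on $\M_\sigma(\tau)$.

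For surjectivity, given $\nu \in \M_\Phi$ I would build its preimage as the cross-section measure on $\Sigma_B \times \{0\}$. First note that $\nu$ restricted to $Z = \Sigma_B \times [0,c)$ has product form: using $\Phi$-invariance one checks that, for fixed $A$, the function $t \mapsto \nu\big(A \times [0,t)\big)$ is additive and monotone on $[0,c]$, hence linear, so $\nu|_Z = \rho \times m|_{[0,c)}$ where $\rho(A) := c^{-1}\nu\big(A \times [0,c)\big)$ defines a Borel measure on $\Sigma_B$ with $\rho(\Sigma_B) \le c^{-1} < \infty$. Because the cross-section $\Sigma_B \times \{0\}$ has uniformly positive return time $\tau \ge c$, the Ambrose--Kakutani theory \cite{ak} applies and yields that $\rho$ is $\sigma$-invariant, that $(\rho \times m)(Y) = \int \tau \, d\rho = \nu(Y) < \infty$, and that $\nu = (\rho \times m)|_Y$. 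Setting $\mu := \rho / \rho(\Sigma_B)$ we get $\mu \in \M_\sigma$ with $\int \tau \, d\mu = 1/\rho(\Sigma_B) < \infty$, so $\mu \in \M_\sigma(\tau)$, and $R(\mu) = R(\rho) = \nu$ by the scale-invariance of $R$.

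I expect the reconstruction step inside the surjectivity argument to be the main obstacle: extending the product identity $\nu|_Z = \rho \times m$ from the bottom slab to all of $Y$, and deducing $\sigma$-invariance of $\rho$, which forces one to use the identification $(x,\tau(x)) \sim (\sigma x, 0)$ together with $\Phi$-invariance of $\nu$ across the top of each fiber. This is exactly where the hypothesis that $\tau$ is bounded away from zero is indispensable: it makes $\Sigma_B \times \{0\}$ a bona fide cross-section of a suspension flow with uniformly positive ceiling, so that the Ambrose--Kakutani correspondence can be invoked verbatim; without it, sigma-finite invariant measures with finite $\tau$-integral may enter the picture and $R$ ceases to be a bijection, as the discussion preceding the statement explains.
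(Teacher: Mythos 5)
Your proof is correct and takes the same route as the paper, which gives no argument at all beyond the remark that the lemma ``can be obtained directly from the results by Ambrose and Kakutani''; you have simply filled in the details that remark leaves implicit. In particular, your bottom-slab reconstruction of $\mu$ from $R(\mu)$ (using that $\tau\ge c$ makes $\Sigma_B\times[0,c)$ sit inside $Y$ with no identifications) and your observation that $\int\tau\,d\nu\ge c\,\nu(\Sigma_B)$ rules out infinite $\sigma$-invariant measures with finite $\tau$-integral are exactly the points at which the hypothesis is used, and they address the two pathologies the paper describes in the paragraph preceding the lemma.
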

Given a continuous function $F \colon Y\to\R$ we define the function
$\Delta_F\colon\Sigma\to\R$~by
\[
\Delta_F(x)=\int_{0}^{\tau(x)} F(x,t) \, dt.
\]
The function $\Delta_F$ is also continuous, moreover
\begin{equation} \label{rela}
\int_{Y} F \, dR(\nu)= \frac{\int_\Sigma \Delta_F\, d
\nu}{\int_\Sigma\tau \, d \nu}.
\end{equation}

\begin{rem}[Extension of potentials defined on the base] \label{exten}
Let $\rho \colon \Sigma_B \to \R$ be a locally H\"older potential. It is shown in
\cite{brw} that there exists a continuous function $F \colon Y \to
\R$ such that $\Delta_F=\rho$. This provides a tool to construct
examples.
\end{rem}

\subsection{Abramov's formula} \label{abramov}
In this short subsection we recall a classical result by Abramov.  The entropy of a flow with respect to an invariant measure can be defined by  the entropy of the corresponding time one map. For the definition of entropy in the context of maps see \cite[Chapter 4]{wa}. In 1959 Abramov \cite{a} showed that  in the context of suspension semiflows (among others) the following relation  holds,

\begin{prop}[Abramov]
Let $\mu \in \M_{\Phi}$ be such that  $\mu=(\nu \times m)|_{Y} /(\nu \times m)(Y)$, where $\nu \in \M_{\sigma}$ then
\begin{equation}
h_{\Phi}(\mu)=\frac{h_{\sigma}(\nu)}{\int \tau \ d \nu}.
\end{equation}
\end{prop}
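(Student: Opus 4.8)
The plan is to reduce the statement to the classical Abramov theorem for suspension flows over a single transformation, exploiting the standard identification of the time-one map of the suspension flow with a first-return construction. First I would recall the setup: given $\nu \in \M_\sigma$ with $\int \tau\, d\nu < \infty$, the measure $\mu = (\nu \times m)|_Y / (\nu \times m)(Y)$ is $\Phi$-invariant, with normalizing constant $(\nu\times m)(Y) = \int \tau\, d\nu$. The entropy $h_\Phi(\mu)$ is by definition the entropy $h(\varphi_1)$ of the time-one map of the flow with respect to $\mu$.

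The key step is to realize the base shift $(\Sigma_B, \sigma, \nu)$, up to the time rescaling, as a return map inside the flow. More precisely, I would identify $\Sigma_B \times \{0\} \subset Y$ as a cross-section, with return time function $\tau$, and observe that the flow $\Phi$ is (measure-theoretically, after normalization) isomorphic to the special flow built over $(\Sigma_B,\sigma)$ with roof $\tau$ in the classical sense of Ambrose--Kakutani. Abramov's original theorem \cite{a} states precisely that for a special flow over a measure-preserving transformation $(X,T,\nu)$ with integrable roof function $\tau$, the entropy of the flow (the time-one map) with respect to the normalized product measure equals $h_\nu(T)/\int \tau\, d\nu$. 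Applying this with $X = \Sigma_B$ and $T = \sigma$ yields exactly
\[
h_\Phi(\mu) = \frac{h_\sigma(\nu)}{\int \tau\, d\nu}.
\]
Since the excerpt permits assuming the results of Ambrose--Kakutani \cite{ak} and cites Abramov \cite{a} directly, the argument is essentially a matter of checking that our suspension semiflow over a countable Markov shift falls within the scope of those classical statements; the countability of the alphabet plays no role here, only the standard-probability-space structure and integrability of $\tau$, both of which hold.

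The main obstacle, such as it is, is a bookkeeping one rather than a conceptual one: one must be careful that $\tau$ need not be bounded (it is only integrable against $\nu$), so one should confirm that Abramov's theorem is stated in the generality of merely integrable — not bounded — roof functions, which it is. A secondary point worth a sentence is the semiflow-versus-flow distinction: for two-sided shifts $\Phi$ is a genuine flow and the identification is immediate, while for one-sided shifts one works with the semiflow, but Abramov's formula holds equally for suspension semiflows (as the excerpt itself notes, "in the context of suspension semiflows (among others)"), so no natural extension argument is needed. I would close by remarking that this formula is exactly what makes the map $R$ of \eqref{R} transport the variational principle from $\sigma$ to $\Phi$, via the identity \eqref{rela} applied to a potential $F$ with $\Delta_F = \rho$.
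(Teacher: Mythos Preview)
Your proposal is correct and matches the paper's treatment: the paper does not give a proof of this proposition at all but simply attributes it to Abramov \cite{a} as a classical result, and your write-up is precisely a fleshed-out version of that citation, checking that the suspension (semi)flow over a countable shift with integrable roof falls within the scope of Abramov's original theorem. There is nothing to add beyond noting that your discussion is more detailed than the paper's own one-line appeal to \cite{a}.
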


It follows from Abramov's result that

\begin{lema}
Let $\mu \in \M_{\Phi}$ be such that $\mu=(\nu \times m)|_{Y} /(\nu \times m)(Y)$, we have that  $h_{\Phi}(\mu)= \infty$ if and only if
$h_{\sigma}(\nu)= \infty$.
\end{lema}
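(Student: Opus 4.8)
The plan is to derive this lemma directly from the Abramov formula stated in the preceding proposition. Fix $\mu \in \M_\Phi$ with $\mu = (\nu \times m)|_Y/(\nu \times m)(Y)$ for some $\nu \in \M_\sigma$. By the Abramov formula we have
\[
h_\Phi(\mu) = \frac{h_\sigma(\nu)}{\int \tau \, d\nu}.
\]
The key observation is that $\tau$ is a positive function and, in the setting relevant to this paper, the denominator $\int \tau \, d\nu$ is a strictly positive and \emph{finite} quantity: it is positive because $\tau > 0$ and $\nu$ is a probability measure, and it is finite precisely because the representation $\mu = R(\nu)$ requires $\nu \in \M_\sigma(\tau)$, i.e. $\int \tau \, d\nu < \infty$ (otherwise $\nu \times m$ would be an infinite measure and $R(\nu)$ would not be a well-defined probability measure, as discussed just before the Ambrose--Kakutani lemma). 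So I would first record that $0 < \int \tau \, d\nu < \infty$.

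With $c := \int \tau \, d\nu \in (0,\infty)$ fixed, the identity $h_\Phi(\mu) = h_\sigma(\nu)/c$ shows that $h_\Phi(\mu)$ and $h_\sigma(\nu)$ differ by multiplication by a finite positive constant. I would then argue both implications. If $h_\sigma(\nu) = \infty$, then $h_\sigma(\nu)/c = \infty$, so $h_\Phi(\mu) = \infty$. Conversely, if $h_\sigma(\nu) < \infty$, then $h_\Phi(\mu) = h_\sigma(\nu)/c < \infty$, so by contraposition $h_\Phi(\mu) = \infty$ forces $h_\sigma(\nu) = \infty$. This gives the desired equivalence.

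The only genuine subtlety — and the step I expect to need the most care — is justifying that $\int \tau \, d\nu$ is finite, since if one allowed $\int \tau \, d\nu = \infty$ the Abramov quotient would be an indeterminate $\infty/\infty$. Here one uses that the statement presupposes $\mu$ is obtained from $\nu$ via the map $R$ as a genuine probability measure, which (as the excerpt explains, and as made precise in the Ambrose--Kakutani lemma for $\tau$ bounded away from zero) requires $\nu \in \M_\sigma(\tau)$; hence $\int \tau \, d\nu < \infty$ is part of the hypothesis. Once that is in hand, the rest is the trivial observation that scaling by a finite positive constant preserves both finiteness and the value $+\infty$. Everything else is a one-line consequence of the Abramov proposition already proved above.
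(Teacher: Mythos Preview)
Your proposal is correct and matches the paper's approach: the lemma is stated immediately after Abramov's proposition with only the remark ``It follows from Abramov's result that,'' and your argument spells out exactly this deduction. The one point you flag --- that the hypothesis forces $\int\tau\,d\nu\in(0,\infty)$ so that the Abramov quotient is a genuine rescaling by a finite positive constant --- is the only thing to check, and you handle it correctly.
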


When the phase space is non-compact  there are several different notions of topological entropy of a flow, we will consider the following,
\begin{defi}
The topological entropy of the suspension flow $(Y ,\Phi)$ denoted by $h(\Phi)$ is defined by
\begin{equation*}
h(\Phi)= \sup \left\{  h_{\Phi}(\mu) : \mu \in \M_{\Phi}   \right\}.
\end{equation*}
\end{defi}

\section{Suspension flow representation for the positive geodesic flow} \label{susp}
It was shown by Gurevich and Katok in \cite{gk} that the positive geodesic flow on the modular surface has a symbolic representation  as a suspension flow. In this section we recall this construction and describe some of its properties.

Consider the alphabet $\mathcal{A}=\{3,4,5,6,\dots \}$.  Let $A$ be the transition matrix defined by $A(i,j)=1$ for every pair $(i,j) \in \mathcal{A} \times \mathcal{A}$ except for the pairs $(i,j) \in \{(3,3) , (3,4), (3,5), (4,3), (5,3) \}$, where $A(i,j)=0$. Note that the pairs for which the matrix has an entry equal to zero correspond to the five Platonic bodies. Denote by $(\Sigma_A, \sigma)$ the corresponding (non-compact) countable Markov shift. The proof of the following result is straight forward.
\begin{lema}
The countable Markov shift $(\Sigma_A, \sigma)$ satisfies the BIP property.
\end{lema}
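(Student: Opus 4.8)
The plan is to verify the BIP property directly from its definition using a finite set of ``anchor'' symbols. Recall that $(\Sigma_A,\sigma)$ satisfies the BIP property if there is a finite set $\{b_1,\dots,b_n\}\subset\mathcal{A}$ such that for every $a\in\mathcal{A}$ there are indices $i,j$ with $A(b_i,a)A(a,b_j)=1$. First I would observe that the only forbidden transitions involve the symbol $3$; more precisely, $A(i,j)=0$ only when both $i$ and $j$ lie in $\{3,4,5\}$ and at least one of them equals $3$. In particular, for any symbol $m\geq 6$ we have $A(m,k)=A(k,m)=1$ for \emph{every} $k\in\mathcal{A}$.

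The key step is therefore to take a single anchor, say $b_1=6$ (so $n=1$). Given an arbitrary $a\in\mathcal{A}$, we must produce $i,j$ with $A(6,a)=1$ and $A(a,6)=1$. Since $6\notin\{3,4,5\}$, neither pair $(6,a)$ nor $(a,6)$ belongs to the list $\{(3,3),(3,4),(3,5),(4,3),(5,3)\}$ of forbidden pairs, regardless of what $a$ is. Hence $A(6,a)=A(a,6)=1$, and the BIP condition holds with $i=j=1$. This already completes the verification; one could equally take $\{b_1,\dots,b_n\}$ to be any nonempty subset of $\{6,7,8,\dots\}$.

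There is essentially no obstacle here — the statement is elementary — so the ``hard part'' is merely making sure the bookkeeping about which transitions are forbidden is stated cleanly. The one thing to be careful about is the edge behaviour of the symbols $3,4,5$ themselves: even for $a\in\{3,4,5\}$ the argument goes through unchanged, because the forbidden pairs never have $6$ as a coordinate, so $3$ (for instance) still connects to and from $6$ freely. I would close by remarking that the same reasoning shows $(\Sigma_A,\sigma)$ is topologically mixing, which is the property actually needed to invoke the Sarig--Mauldin--Urba\'nski theory quoted above, but this is not required for the lemma as stated.
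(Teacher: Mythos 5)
Your proof is correct, and since the paper simply declares the lemma ``straightforward'' without spelling out an argument, your direct verification with the single anchor $b_1=6$ is exactly the intended reasoning: none of the five forbidden pairs has $6$ as a coordinate, so $A(6,a)=A(a,6)=1$ for every $a\in\mathcal{A}$. One small caution on your closing remark: Sarig's results quoted in the paper require the BIP property \emph{together with} topological mixing (not mixing in place of BIP), though as you note your anchor argument delivers mixing as well, since any word can be padded arbitrarily through the symbol $6$.
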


\begin{rem} \label{sub-full}
Note that the full-shift on the alphabet $\mathcal{A}'=\{6,7,8,\dots\}$, that we will denote by
$(\Sigma_F,\sigma)$, is a sub-shift of $(\Sigma_A,\sigma)$.
\end{rem}


Constructing an appropriate cross section and computing the first return time function of the geodesic flow to it, Gurevich an Katok \cite{gk} showed that the positive geodesic flow on the modular surface can be represented by the suspension flow  $(Y^*, \Phi^*)$. This flow is defined over the countable (two-sided) Markov shift $(\Sigma_A^*, \sigma)$, where
\[\Sigma_A^*:=\left\{ (x_n)_{n \in \Z} : A(x_n, x_{n+1})=1  \text{ for every } n \in \Z \right\},\]
and it has height function $\tau^*(x)= 2 \log w(x)$, where
\[w(x)= \textrm{ } n_1 +\cfrac{1}{n_2 - \cfrac{1}{n_3 - \cfrac{1}{n_4 - \dots}}}, \]
for $x=(\dots n_{-1}, n_0, n_1, n_2 , n_3 \dots)$. Note that the height function only depends on the \emph{future} coordinates of $x \in \Sigma_A^*$.

 The result in \cite{gk} is that  there exits a continuous bijection, $\pi: Y^* \to T^1 M^+$, where $ T^1 M^+$ is the space of  unit vectors tangent to positive geodesics. With the property   that for every $t \in \R$ we have $\pi \circ \phi_t (x) = \psi_t \circ \pi (x)$. Sumarising,

\begin{teo}[Gurevich-Katok]
The suspension flow $(Y^*, \Phi^*)$  is a symbolic representation of the positive geodesic flow on the modular surface.
\end{teo}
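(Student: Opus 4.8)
The plan is to assemble the theorem from the pieces that Gurevich and Katok establish in \cite{gk}, packaging their construction in the language of suspension flows over countable Markov shifts set up in the previous section. The statement to be proved is essentially a citation-level assembly result, so the work is in verifying that the objects Gurevich and Katok produce satisfy the hypotheses needed for the suspension-flow formalism of \cite{BI} to apply, rather than in a genuinely new argument.

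First I would recall the cross section $C$ to the geodesic flow $\{\psi_t\}$ on $T^1M$ constructed in \cite{gk}: the set of unit vectors based on the appropriate arc of the unit circle (the boundary of the fundamental region $F$) tangent to reduced positive geodesics. One checks that every positive geodesic meets $C$ infinitely often in both time directions (this is exactly where positivity is used: a positive geodesic returns to the circular boundary of $F$ with all segments clockwise oriented, which forces the arithmetic code to be well defined and to have all entries $\geq 3$), so $C$ is a genuine global cross section for the restricted flow on $T^1M^+$. Next I would invoke the arithmetic coding: the first-return map of $\{\psi_t\}$ to $C$ is topologically conjugate to the shift $\sigma$ on $\Sigma_A^*$, via the map sending a vector to the arithmetic code $(\dots,n_{-1},n_0,n_1,\dots)$ of the geodesic it determines — the admissibility constraints on consecutive symbols are precisely the five forbidden pairs defining $A$, corresponding to the Platonic solids. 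This gives the conjugacy between the base of the suspension and the return map.

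Then I would identify the return-time function. A direct computation in $\mathrm{PSL}(2,\R)$ coordinates — using the matrix form of $\overline{\psi}_t$ given in the introduction and the fact that a reduced geodesic with attracting endpoint $w>1$ travels across $F$ in hyperbolic time proportional to $\log w$ — shows that the first return time of a vector coded by $x$ equals $\tau^*(x) = 2\log w(x)$, where $w(x)$ is the minus-continued-fraction value of the future coordinates. In particular $\tau^*$ depends only on the future, it is locally H\"older, and since every symbol is $\geq 3$ one has $w(x) \geq 3 - 1/3 > 1$, so $\tau^*$ is bounded away from zero; this is the property that makes the map $R$ of the previous section well behaved. Finally, the suspension $(Y^*,\Phi^*)$ over $(\Sigma_A^*,\sigma)$ with height $\tau^*$ is, by the standard correspondence between a flow and the suspension of a cross-section return map, conjugate to $\{\psi_t\}|_{T^1M^+}$; the conjugacy $\pi\colon Y^* \to T^1M^+$ sends $(x,s)$ to $\psi_s$ applied to the vector with arithmetic code $x$, and $\pi\circ\phi_t = \psi_t\circ\pi$ holds by construction. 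Continuity and bijectivity of $\pi$ follow from continuity and bijectivity of the arithmetic coding map on the relevant sets, as shown in \cite{gk}.

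The main obstacle, and the only genuinely delicate point, is verifying that the arithmetic coding map is actually a bijection between $\Sigma_A^*$ and the set of (reduced) positive geodesics — that is, that every admissible sequence over $\mathcal{A}$ with the five forbidden transitions arises from a positive geodesic and conversely, with no loss of information from the minus-continued-fraction algorithm. This is where the Gurevich–Katok theorem on equality of the arithmetic and geometric codes does the heavy lifting: positivity of a geodesic is equivalent to the coincidence of the two codes, which in turn pins down exactly which integer sequences occur and guarantees the dynamics is a clean shift rather than merely a factor of one. I would therefore cite \cite{gk} for this bijection and for the identification of the forbidden pairs, and then the remainder of the proof is the routine suspension-flow bookkeeping described above.
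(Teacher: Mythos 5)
The paper offers no proof of this statement: it is presented as a summary of the construction carried out by Gurevich and Katok in \cite{gk}, and the paragraph preceding it simply recounts that they build a cross section, compute the first-return time $\tau^*(x)=2\log w(x)$, and produce a continuous bijection $\pi\colon Y^*\to T^1M^+$ intertwining the two flows. You correctly recognize this (``a citation-level assembly result'') and your sketch faithfully reconstructs the outline of the Gurevich--Katok argument: cross section of reduced positive geodesics, first-return map conjugate to the shift on $\Sigma_A^*$ with the five forbidden pairs, return time $2\log w(x)$ depending only on future coordinates, and the bijection $\pi$ built by flowing from the coded vector. This matches the paper's treatment of the statement.

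One small numerical slip: with all entries $n_i\ge 3$ the minus continued fraction satisfies $w(x)\ge (3+\sqrt{5})/2$ (the fixed point of $x=3-1/x$, which is exactly $3c$ for the constant $c=(3+\sqrt{5})/6$ appearing in the paper's subsequent remark), not $3-1/3$; the conclusion that $w$ is bounded away from $1$ and hence $\tau^*$ is bounded away from zero is of course unaffected. Also be slightly careful with the parenthetical attribution: the arithmetic code is well defined for any geodesic not running to the cusp; what positivity buys you is the restriction of the symbols to $\mathcal{A}=\{3,4,5,\dots\}$ with the five forbidden transitions, and the coincidence with the geometric code, which is precisely what makes the first-return dynamics a clean Markov shift over that alphabet. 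You do gesture at this correctly in the final paragraph by invoking the equality-of-codes theorem.
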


Since every potential $\overline{\rho}:\Sigma_A^* \to \R$ is cohomologous to a potential
 $\rho:\Sigma_A^* \to \R$ that only depends on \emph{future} coordinates, we can reduce the study of the suspension flow to that of the suspension semi-flow. This is a standard procedure, see for example \cite[p.93]{pp}. Therefore, in order to study the ergodic theory of the positive geodesic flow it is enough to understand the ergodic theory of the suspension semi-flow $(Y,\Phi)$ defined over the countable Markov shift $(\Sigma_A, \sigma)$ with height function $\tau:\Sigma_A \to \R$ defined by $\tau(x)= 2\log w(x)$.

\begin{rem}
Let $c= (3 + \sqrt{5})/6$. If $x=(\dots n_{-1}, n_0, n_1, n_2 , n_3 \dots)$ then
\begin{equation*}
2 \log (c n_1) \leq \tau(x) \leq 2 \log n_1.
\end{equation*}
\end{rem}

\section{Thermodynamic formalism for the positive geodesic flow} \label{termo}
The main purpose of this note is to define the pressure function for the positive geodesic flow on the modular surface and to study its properties. In order to do so, we will use the symbolic representation explained in the previous section together with results of Barreira and Iommi \cite{BI} on thermodynamic formalism for suspension flows over countable Markov shifts.

As we have seen in Section  \ref{susp} it is enough to study the suspension semi-flow $(Y, \Phi)$. We start by defining the class of potentials that we will consider. Denote by
\begin{equation}
\mathcal{P}:= \left\{F:Y \to \R: \text{the potential } \Delta_F \text{ is locally H\"older}   \right\}.
\end{equation}
\begin{rem} \label{bounded}
Note that if $F \in \mathcal{P}$ then there exists sequences $(s_n)_n$ and $(S_n)_n$ such that $s_n \leq \Delta_F|C_n \leq S_n$.
\end{rem}
Following the strategy developed by Barreira and Iommi in \cite{BI} we define the pressure.

\begin{defi} \label{def-pre}
Let $F \in \mathcal{P}$, the \emph{pressure} of $F$ with respect to the semi-flow $(Y, \tau)$ is defined by
\begin{equation}
 P_{\Phi}(F)= \inf \{t \in \R : P_{\sigma}(\Delta_F  -t \tau)  \leq 0  \}.
 \end{equation}
We assume the convention that $ P_{\Phi}(F)=\infty$ when the infimum is taken over the empty set.
\end{defi}

Of particular interest is the case in which the potential $F$ is the null potential. Indeed, in that case the pressure is equal to the entropy of the flow $P_{\Phi}(0)=h(\Phi)$. This case was studied by Polyakov \cite{p} and by Gurevich and Katok \cite{gk}. They obtained estimates for the entropy of the flow. In \cite{gk} the following bound was obtained
\begin{equation}
 0.7771 \leq h(\Phi) \leq 0.8161.
\end{equation}
Note that the entropy of the geodesic flow on the modular surface has entropy equal to one.

Let us make a few comments on Definition \ref{def-pre}. First note that there are potentials for which the equation $ P_{\sigma}(\Delta_F  -t \tau) =0$ does not have a root.

\begin{eje} \label{nroot}
Let $\rho:\Sigma_A \to \R$ be the locally constant potential defined by $\rho|C_n= \log (\log n)^{-2}$. In virtue of  Remark \ref{exten} there exists a potential $F \in \mathcal{P}$ such that $\Delta_F= \rho$. Let us extend $\rho$ to the full-shift on $\N$ defining it to be equal to zero on the cylinders where it remains to be defined. By Remark \ref{formula} we can bound the pressure by
\[P_{\sigma}( \Delta_F - t\tau)= P_{\sigma}( \log(\log n)^{-2}- t\log n^{-2}) < \log \left(\sum_{n=1}^{\infty} \frac{1}{n^{-2t}(\log n)^2} \right).\]
Therefore $P_{\Phi}(F)=1/2$ and $P_{\sigma}( \Delta_F - P_{\Phi}(F)\tau)<0.$
\end{eje}

When the equation defining the pressure has root, regularity properties of the pressure can be obtained.

\begin{teo}[Regularity]\label{reg}
Let $F \in \mathcal{P}$ with the property that there exists $\epsilon >0$ such that if $q \in (1-\epsilon, 1+ \epsilon)$ then the equation
\[ P_{\sigma}(\Delta_{qF}  -t \tau)  = 0  \]
has a unique root. Then, then the function
$q\mapsto P_\Phi(qF)$ is real analytic in the interval $(1-\epsilon, 1+ \epsilon)$.
\end{teo}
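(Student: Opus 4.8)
The plan is to apply the real analytic implicit function theorem to the two--variable function $G(q,t):=P_\sigma(q\Delta_F-t\tau)$. Write $\rho:=\Delta_F$, which is locally H\"older by the hypothesis $F\in\mathcal P$, and note that $\Delta_{qF}=q\Delta_F=q\rho$, so that by Definition \ref{def-pre} we have $P_\Phi(qF)=\inf\{t\in\R:G(q,t)\le 0\}$. Recall also from Section \ref{susp} that the height function $\tau(x)=2\log w(x)$ is locally H\"older and, since the alphabet is $\{3,4,5,\dots\}$, is bounded away from zero: $\tau(x)\ge 2\log(3c)=:\tau_0>0$ with $c=(3+\sqrt5)/6$. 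Since $\tau\ge 0$ and $P_\sigma$ is monotone in the potential, for fixed $q$ the map $t\mapsto G(q,t)$ is non--increasing; together with the strict local decrease established below (from the formula for $\partial_tG$) and the hypothesis that $G(q,\cdot)$ has a (unique) root $s(q)$, an elementary argument gives $\{t:G(q,t)\le 0\}=[s(q),\infty)$, whence $P_\Phi(qF)=s(q)$.

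The first main step is to show that $G$ is real analytic on the set $\mathcal U:=\{(q,t)\in\R^2:P_\sigma(q\rho-t\tau)<\infty\}$ and that $\mathcal U$ is open. Because $(\Sigma_A,\sigma)$ satisfies the BIP property (Section \ref{susp}), the Ruelle--Perron--Frobenius machinery of Sarig \cite{sa2,sa3} and Mauldin--Urba\'nski \cite{mu} that underlies the Theorem of Section \ref{cms} applies: the transfer operator of a locally H\"older potential of finite pressure acts on a suitable Banach space with a spectral gap, and $e^{P_\sigma(\cdot)}$ is its leading eigenvalue, which is simple and isolated. The family $(q,t)\mapsto q\rho-t\tau$ is jointly real analytic with values in the locally H\"older potentials, the associated transfer operators depend analytically on $(q,t)$ throughout $\mathcal U$, and analytic perturbation theory for an isolated simple eigenvalue then gives that $(q,t)\mapsto P_\sigma(q\rho-t\tau)=\log(\text{leading eigenvalue})$ is real analytic on $\mathcal U$, which in particular is open. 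This is simply the two--parameter version of the one--parameter regularity statement quoted in Section \ref{cms}, obtained by the same arguments; it follows the strategy of Barreira and Iommi in \cite{BI}.

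The second step is the nondegeneracy condition $\partial_tG\neq 0$ at the relevant point. At $(q,s(q))$ the pressure equals $0$, hence is finite, and $q\rho-t\tau$ has summable variations (both $\rho$ and $\tau$ being locally H\"older), so by the Theorem of Section \ref{cms} there is a Gibbs equilibrium measure $\mu_{q,t}$ for $q\rho-t\tau$. Differentiating the pressure (equivalently, the leading eigenvalue) in $t$ yields $\partial_tG(q,t)=-\int\tau\,d\mu_{q,t}\le-\tau_0<0$; in particular $\partial_tG(1,s(1))<0$, and the same inequality shows $t\mapsto G(q,t)$ is strictly decreasing near $s(q)$, as used above. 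Applying the analytic implicit function theorem to $G$ at $(1,s(1))=(1,P_\Phi(F))\in\operatorname{int}\mathcal U$ produces $\epsilon'>0$ and a real analytic $q\mapsto t(q)$ on $(1-\epsilon',1+\epsilon')$ with $t(1)=P_\Phi(F)$ and $G(q,t(q))=0$; by uniqueness of the root (the hypothesis, valid on $(1-\epsilon,1+\epsilon)$) one has $t(q)=s(q)=P_\Phi(qF)$, which is therefore real analytic on $(1-\epsilon,1+\epsilon)\cap(1-\epsilon',1+\epsilon')$, and after shrinking $\epsilon$ this is the assertion.

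The step I expect to be the main obstacle is the first one: the regularity result cited from \cite{sa2,sa3,mu} is phrased for the scaling family $t\mapsto t\rho$, whereas here one needs joint real analyticity of the pressure on the affine two--parameter family $q\rho-t\tau$ and openness of its finiteness region. Establishing this requires unwinding the transfer--operator and analytic--perturbation arguments behind that statement and, crucially, controlling the domain so that the point $(1,P_\Phi(F))$ lies in the interior of the set where $P_\sigma(q\rho-t\tau)<\infty$; the remaining ingredients (strict monotonicity, the derivative formula, and the identification $P_\Phi(qF)=s(q)$) are routine given the results already available in the excerpt.
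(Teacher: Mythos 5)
Your proposal is correct and follows essentially the same route as the paper: reduce to the implicitly defined root $s(q)$ of $P_\sigma(\Delta_{qF}-t\tau)=0$, observe that $P_\Phi(qF)=s(q)$, verify nondegeneracy via $\partial_t P_\sigma(\Delta_{qF}-t\tau)=-\int\tau\,d\mu<0$ with $\mu$ the Gibbs/equilibrium measure (here you add the quantitative bound $\le -\tau_0$ from $\tau$ being bounded below, which is a nice touch), and invoke the analytic implicit function theorem. The one place where you go beyond the paper is that you explicitly flag that the cited regularity result is stated for the one-parameter scaling family $t\mapsto t\rho$, whereas the implicit function theorem genuinely needs joint real analyticity of $(q,t)\mapsto P_\sigma(q\rho-t\tau)$ on an open neighbourhood of the root; the paper cites \cite[Corollary 4]{sa3} and moves directly to the implicit function theorem without making this explicit. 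Your sketch of the fix (two-parameter analytic perturbation of the isolated leading eigenvalue of the RPF operator, openness of the finiteness region) is the right one and is what the paper is implicitly appealing to, though you leave it at the level of a plan rather than carrying it out. One small presentational point: you only apply the implicit function theorem at $q=1$ and then ``shrink $\epsilon$,'' which would only give analyticity on a possibly smaller interval; instead apply it at every $q_0\in(1-\epsilon,1+\epsilon)$ (the hypothesis gives a unique root $s(q_0)$ at each such $q_0$, and the same nondegeneracy holds there), so the conclusion holds on the full interval $(1-\epsilon,1+\epsilon)$ as stated.
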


\begin{proof}
Since $\Sigma_A$ satisfies the BIP property and if $q \in (1-\epsilon, 1+ \epsilon)$ then $\Delta_{qF}$
is locally H\"older, the function $t\mapsto
P_\sigma(\Delta_{qF}-t\tau)$, when finite, is real analytic (see subsection \ref{cms} or
 \cite[Corollary 4]{sa3}). The result
now follows from the implicit function theorem: $P_\sigma(\Delta_{qF}
-P_\Phi(qF)\tau)=0$, and in order to verify the nondegeneracy
condition note that (see \cite[Chapter 4]{PU})
\[
\frac{\partial}{\partial t} P_\sigma(\Delta_{qF} -t\tau) \Big|_{t=s} =
-\int_\Sigma \tau \, d \mu < 0,
\]
where $\mu$ denotes the equilibrium (Gibbs) measure of $\Delta_{qF}-s\tau$ (the existence of such measure was proved in \cite[Theorem 1]{sa3}, see subsection \ref{cms}).
\end{proof}

The next two results are direct consequence of more general statements obtained in \cite[Theorem 1 and Theorem 2]{BI}, the proofs are the same.
Both are fundamental results in thermodynamic formalism. In particular they show that the notion of pressure we have introduced is a \emph{correct} definition.

\begin{teo}[Approximation property]\label{apro}
 If $F \in \mathcal{P}$  then
\[
P_\Phi(F)= \sup \left\{ P_{\Phi|K}(F) : K\subset Y \text{ compact
and $\Phi$-invariant} \right\},
\]
where $ P_{\Phi|K}(\cdot)$ is the pressure on compact spaces (see \cite[Chapter 9]{wa}).
 \end{teo}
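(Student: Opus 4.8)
The plan is to prove the two inequalities separately, following the standard template for approximation of pressure by compact invariant subsets. The easy inequality is $P_\Phi(F) \geq \sup_K P_{\Phi|K}(F)$: if $K \subset Y$ is compact and $\Phi$-invariant, then $K$ is contained in a sub-suspension-flow over a compact $\sigma$-invariant subset $\Sigma_A^K \subset \Sigma_A$ built on finitely many symbols (since the height function $\tau(x) = 2\log w(x)$ is bounded below by $2\log(3c)$ and compactness in $Y$ forces the base projection to avoid escaping to infinity in the alphabet, up to the usual identification on the ceiling). On such a compact piece the classical variational principle and the definition of pressure via $P_\Phi(F) = \inf\{t : P_\sigma(\Delta_F - t\tau) \leq 0\}$ agree with Abramov's formula, so $P_{\Phi|K}(F) = \inf\{t : P_{\sigma|\Sigma_A^K}(\Delta_F - t\tau) \leq 0\} \leq P_\Phi(F)$ because $P_{\sigma|\Sigma_A^K} \leq P_\sigma$ by monotonicity of pressure under passing to subsystems. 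Taking the supremum over $K$ gives one direction.

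For the reverse inequality $P_\Phi(F) \leq \sup_K P_{\Phi|K}(F)$, the key step is to exhaust $\Sigma_A$ by compact $\sigma$-invariant subsystems. Let $\Sigma_n$ denote the sub-shift on the finite alphabet $\{3, 4, \dots, n\}$ (intersected with the admissible sequences), which is a compact subshift of finite type, and let $Y_n \subset Y$ be the corresponding suspension flow; each $Y_n$ is compact and $\Phi$-invariant. By the approximation property for pressure on countable Markov shifts (this is precisely the statement that $P_\sigma(\Delta_F - t\tau) = \sup_n P_{\sigma|\Sigma_n}(\Delta_F - t\tau)$, which follows from Sarig's work and is used implicitly in subsection \ref{cms}), for any $t < P_\Phi(F)$ we have $P_\sigma(\Delta_F - t\tau) > 0$, hence $P_{\sigma|\Sigma_n}(\Delta_F - t\tau) > 0$ for $n$ large enough. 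Since on the compact system $P_{\Phi|Y_n}(F) = \inf\{s : P_{\sigma|\Sigma_n}(\Delta_F - s\tau) \leq 0\}$ and the function $s \mapsto P_{\sigma|\Sigma_n}(\Delta_F - s\tau)$ is continuous and strictly decreasing, $P_{\sigma|\Sigma_n}(\Delta_F - t\tau) > 0$ forces $P_{\Phi|Y_n}(F) > t$. Letting $t \uparrow P_\Phi(F)$ yields $\sup_n P_{\Phi|Y_n}(F) \geq P_\Phi(F)$, and a fortiori the supremum over all compact invariant $K$ is at least $P_\Phi(F)$.

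The main obstacle I anticipate is the case $P_\Phi(F) = \infty$, i.e. when the infimum in Definition \ref{def-pre} is over the empty set so that $P_\sigma(\Delta_F - t\tau) > 0$ (or $= \infty$) for every $t$. Here one must show $\sup_K P_{\Phi|K}(F) = \infty$ as well. This again reduces to the corresponding statement on the base: one shows that for every $t$ there is a compact $\Sigma_n$ with $P_{\sigma|\Sigma_n}(\Delta_F - t\tau)$ finite but positive (using Remark \ref{bounded} to control $\Delta_F$ on cylinders together with the BIP/exhaustion structure), whence $P_{\Phi|Y_n}(F) > t$; since $t$ is arbitrary the supremum is infinite. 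The point requiring care is that $\Delta_F$ and $\tau$ are both unbounded, so one cannot directly invoke a compact-system variational principle globally; the argument must genuinely be run on each finite-alphabet piece $\Sigma_n$, where both potentials are bounded and classical thermodynamic formalism applies verbatim. Since the statement asserts the proof is "the same" as in \cite[Theorem 1]{BI}, I would at this point simply cite that argument, noting that the only feature of the height function used is that it is bounded away from zero (Lemma on bijectivity of $R$ in subsection on suspension flows) and locally Hölder, both of which hold here by the Remark following the Gurevich--Katok theorem.
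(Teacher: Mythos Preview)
Your proposal is correct and follows essentially the same approach as the paper; indeed the paper gives no proof at all, merely stating that the result is a direct consequence of \cite[Theorem~1]{BI} with the same proof, and your argument (exhaust $\Sigma_A$ by finite-alphabet subshifts, use the approximation property of $P_\sigma$, and translate via the implicit equation $P_\sigma(\Delta_F - t\tau)=0$) is precisely the Barreira--Iommi template. One small cleanup: for the easy inequality it is simpler to invoke the variational principle on the compact $K$ together with the inclusion $\M_{\Phi|K}\subset\M_\Phi$, rather than arguing that $K$ sits inside a finite-alphabet sub-suspension (which requires the extra observation that a compact $\sigma$-invariant subset of $\Sigma_A$ uses only finitely many symbols---true, but unnecessary here).
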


\begin{teo}[Variational principle] \label{var1}
 If $F \in \mathcal{P}$  then
\begin{equation}\label{*bb71}
P_\Phi(F) = \sup \left\{ h_{\mu}(\Phi) +\int_Y F \,d \mu : \mu\in
\M_\Phi \text{ and } -\int_Y F \, d\mu <\infty \right\}.
\end{equation}
\end{teo}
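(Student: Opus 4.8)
The plan is to transfer the variational principle from the base shift $(\Sigma_A,\sigma)$ to the suspension semi-flow $(Y,\Phi)$ using the dictionary between $\M_\sigma(\tau)$ and $\M_\Phi$ provided by Abramov's formula and the relation \eqref{rela}. Write $P := P_\Phi(F)$ and denote by $S$ the supremum on the right-hand side of \eqref{*bb71}. I would prove $P \le S$ and $P \ge S$ separately.

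For the inequality $S \le P$, take any $\mu \in \M_\Phi$ with $-\int_Y F\,d\mu < \infty$. By the Ambrose–Kakutani correspondence (and the fact that $\tau$ is bounded away from zero, hence the map $R$ of \eqref{R} is the relevant bijection onto its image) there is $\nu \in \M_\sigma(\tau)$ with $\mu = R(\nu)$. Abramov's formula gives $h_\mu(\Phi) = h_\sigma(\nu)/\int\tau\,d\nu$, and \eqref{rela} gives $\int_Y F\,d\mu = \int \Delta_F\,d\nu \big/ \int\tau\,d\nu$. Hence
\[
h_\mu(\Phi) + \int_Y F\,d\mu = \frac{h_\sigma(\nu) + \int \Delta_F\,d\nu}{\int\tau\,d\nu}.
\]
Now fix $t > P$. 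By definition of $P_\Phi(F)$ as an infimum we have $P_\sigma(\Delta_F - t\tau) \le 0$, so by the variational principle for countable Markov shifts (applied to the locally H\"older potential $\Delta_F - t\tau$, valid provided $-\int(\Delta_F - t\tau)\,d\nu < \infty$, which holds here since $-\int F\,d\mu<\infty$ and $\tau$ is integrable against $\nu$) we get $h_\sigma(\nu) + \int\Delta_F\,d\nu - t\int\tau\,d\nu \le 0$, i.e. $h_\mu(\Phi) + \int_Y F\,d\mu \le t$. Letting $t \downarrow P$ yields $h_\mu(\Phi) + \int_Y F\,d\mu \le P$, hence $S \le P$.

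For the reverse inequality $P \le S$, I would argue by contradiction: suppose $S < P$, pick $t$ with $S < t < P$. Since $t < P = \inf\{s : P_\sigma(\Delta_F - s\tau) \le 0\}$, we have $P_\sigma(\Delta_F - t\tau) > 0$ (strictly positive, possibly $+\infty$). The goal is to produce a measure $\nu \in \M_\sigma(\tau)$ with $h_\sigma(\nu) + \int\Delta_F\,d\nu - t\int\tau\,d\nu > 0$ and with $\tau$ integrable; then $R(\nu) \in \M_\Phi$ realises $h_{R(\nu)}(\Phi) + \int_Y F\,dR(\nu) > t > S$, a contradiction. The main obstacle is exactly this step: the supremum defining $P_\sigma(\Delta_F - t\tau)$ need not be attained by a measure, and even when it is, one must ensure the approximating measure integrates $\tau$ (this is where non-compactness bites, since $\tau$ is unbounded). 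I would handle this by exhausting $\Sigma_A$ by the compact $\sigma$-invariant subsystems $\Sigma_A^{(k)}$ supported on finitely many symbols $\{3,\dots,k\}$ (or the relevant transitive sub-alphabets), on which $\tau$ is bounded and the classical variational principle and approximation property hold; since $P_\sigma$ restricted to these compact subsystems approximates $P_\sigma$ from below (this is the approximation property for countable Markov shift pressure, as in \cite[Theorem 1]{BI} or Theorem \ref{apro} transferred to the base), for $k$ large enough one finds an invariant measure $\nu_k$ supported on $\Sigma_A^{(k)}$ with $h_\sigma(\nu_k) + \int\Delta_F\,d\nu_k - t\int\tau\,d\nu_k > 0$; such $\nu_k$ automatically satisfies $\int\tau\,d\nu_k < \infty$ since $\tau$ is bounded on $\Sigma_A^{(k)}$. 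This $\nu_k$ gives the desired contradiction.

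Finally I would remark that this argument is precisely the content of \cite[Theorem 2]{BI} specialised to the present setting, so one could alternatively just invoke that theorem directly after checking its hypotheses ($\tau$ locally H\"older and bounded away from zero, $\Delta_F$ locally H\"older), which are met here by Remark \ref{bounded} and the explicit form $\tau(x) = 2\log w(x) \ge 2\log(c\cdot 3) > 0$. The only genuine work beyond citing \cite{BI} is verifying that the integrability condition $-\int_Y F\,d\mu < \infty$ on the flow side corresponds correctly to the condition $-\int(\Delta_F - t\tau)\,d\nu < \infty$ on the shift side, which follows from \eqref{rela} together with $\int\tau\,d\nu < \infty$.
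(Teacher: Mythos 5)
Your proposal is correct, and as you note at the end it is precisely the argument of \cite[Theorem 2]{BI} specialised to this setting; this is exactly how the paper handles the statement, which it presents as a direct consequence of \cite[Theorem 2]{BI} with ``the proofs are the same'' rather than spelling out the two inequalities.
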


A measure $\mu \in \M_{\Phi}$ such that $P_\Phi(F) = h_{\mu}(\Phi) +\int_Y F \,d \mu $ is called \emph{equilibrium measure} for $F$.

\begin{teo}[Equilibrium measures]\label{faa}
Let $F \in \mathcal{P}$ the following properties are equivalent:
\begin{enumerate}
\item
there is an equilibrium measure $\mu_F \in \M_\Phi$ for $F$;
\item $P_\sigma(\Delta_F -P_{\Phi}(F)\tau)=0$ and there
is an equilibrium measure $\nu_F\in \M_\sigma( \tau)$ for $\Delta_F -P_{\Phi}(F)\tau$.
\end{enumerate}
\end{teo}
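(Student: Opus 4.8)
The plan is to establish the equivalence by going through the relation between invariant measures of the semi-flow $\Phi$ and invariant measures of the shift $\sigma$ provided by the map $R$ of~\eqref{R}, together with Abramov's formula and the variational principle (Theorem~\ref{var1}). The key structural fact is that, since $\tau(x)=2\log w(x)$ and $w(x)\ge cn_1$ with $c=(3+\sqrt5)/6>1$ (see the Remark at the end of Section~\ref{susp}), the height function $\tau$ is bounded away from zero; hence by the Lemma following equation~\eqref{R} the map $R\colon \M_\sigma(\tau)\to\M_\Phi$ is a bijection. This lets me pass freely between a candidate equilibrium measure $\mu_F$ on $Y$ and the measure $\nu_F=R^{-1}(\mu_F)\in\M_\sigma(\tau)$ on the base.

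I would first prove $(2)\Rightarrow(1)$. Assume $P_\sigma(\Delta_F-P_\Phi(F)\tau)=0$ and let $\nu_F\in\M_\sigma(\tau)$ be an equilibrium measure for $\Delta_F-P_\Phi(F)\tau$, so that
\[
0=P_\sigma(\Delta_F-P_\Phi(F)\tau)=h_\sigma(\nu_F)+\int_{\Sigma_A}\bigl(\Delta_F-P_\Phi(F)\tau\bigr)\,d\nu_F .
\]
Set $\mu_F=R(\nu_F)\in\M_\Phi$. By Abramov's formula, $h_\Phi(\mu_F)=h_\sigma(\nu_F)/\int\tau\,d\nu_F$, and by the change of variables~\eqref{rela}, $\int_Y F\,d\mu_F=\int\Delta_F\,d\nu_F/\int\tau\,d\nu_F$. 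Dividing the displayed identity by $\int\tau\,d\nu_F>0$ and rearranging yields $h_\Phi(\mu_F)+\int_Y F\,d\mu_F=P_\Phi(F)$, so $\mu_F$ is an equilibrium measure for $F$ by the variational principle (and the finiteness condition $-\int_Y F\,d\mu_F<\infty$ follows from $-\int\Delta_F\,d\nu_F<\infty$, which holds since $\nu_F$ is an equilibrium measure in the sense of the countable-shift variational principle).

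For $(1)\Rightarrow(2)$, let $\mu_F\in\M_\Phi$ be an equilibrium measure for $F$ and put $\nu_F=R^{-1}(\mu_F)\in\M_\sigma(\tau)$, which exists and is unique by the bijectivity Lemma. Running the same computation backwards, the identity $P_\Phi(F)=h_\Phi(\mu_F)+\int_Y F\,d\mu_F$ combined with Abramov and~\eqref{rela} gives
\[
h_\sigma(\nu_F)+\int_{\Sigma_A}\bigl(\Delta_F-P_\Phi(F)\tau\bigr)\,d\nu_F=0,
\]
so by the variational principle for countable Markov shifts $P_\sigma(\Delta_F-P_\Phi(F)\tau)\ge 0$. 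The reverse inequality $P_\sigma(\Delta_F-P_\Phi(F)\tau)\le 0$ is exactly what the definition of $P_\Phi(F)$ as an infimum gives, using lower semicontinuity/monotonicity of $t\mapsto P_\sigma(\Delta_F-t\tau)$ at $t=P_\Phi(F)$; hence $P_\sigma(\Delta_F-P_\Phi(F)\tau)=0$ and $\nu_F$ is the desired equilibrium measure. The main obstacle I anticipate is the last point: justifying that the infimum in Definition~\ref{def-pre} is actually attained with value $0$, i.e. that $P_\sigma(\Delta_F-P_\Phi(F)\tau)\le 0$ rather than merely $\le\varepsilon$ for all $\varepsilon>0$. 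This requires a continuity (or at least upper semicontinuity) property of $t\mapsto P_\sigma(\Delta_F-t\tau)$ at the critical parameter, which for BIP shifts follows from the regularity results recalled in subsection~\ref{cms}; care is needed because the pressure can jump from $+\infty$ to a finite value, but at $t=P_\Phi(F)$ we are on the finite, real-analytic branch, so continuity applies. This is precisely the step handled in~\cite[Theorem 2]{BI}, and the proof here is the same.
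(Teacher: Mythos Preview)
Your proposal is correct and follows essentially the same approach as the paper: both proofs use the bijection $R\colon\M_\sigma(\tau)\to\M_\Phi$ (available because $\tau$ is bounded below), Abramov's formula, the relation~\eqref{rela}, and the two variational principles to pass back and forth between equilibrium measures for $F$ and for $\Delta_F-P_\Phi(F)\tau$. The only organizational difference is that the paper first asserts $P_\sigma(\Delta_F-P_\Phi(F)\tau)\le 0$ from Definition~\ref{def-pre} and then argues by the dichotomy ``strictly negative implies no equilibrium for the flow'' versus ``equal to zero'', whereas you obtain $\ge 0$ directly from the pulled-back measure and then close with the $\le 0$ inequality; the point you flag as an obstacle (continuity of $t\mapsto P_\sigma(\Delta_F-t\tau)$ at the infimum) is simply asserted in the paper without further comment, so your treatment of it is in fact more careful than the original.
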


\begin{proof}
We will assume that $P_{\Phi}(F) < \infty$, otherwise there is no equilibrium measure.
Recall that  from Definition \ref{presion} of topological pressure we have  that $P_\sigma(\Delta_F -P_{\Phi}(F)\tau)\le0$.

Let us first consider the case in which  $P_\sigma(\Delta_F -P_{\Phi}(F)\tau)<0$. Given $\nu\in \M_\sigma(\tau)$, by the variational principle for the pressure $P_{\sigma} ( \cdot)$ (see subsection \ref{cms}  or \cite[Theorem 3]{sa1}) we have
\[
h_\nu(\sigma) +\int_\Sigma \Delta_F \, d \nu -P_\Phi(F)
\int_\Sigma\tau \, d \nu <0.
\]
Since the space $\M_\sigma(\tau)$ can be identified with $\M_\Phi$, using
Abramov's formula (see subsection \ref{abramov}) we obtain that for every $\mu\in \M_\Phi$,
\[
h_{\mu}(\Phi) +\int_Y F \, d \mu < P_\Phi(F).
\]
Therefore, there are no equilibrium measures in this case.

Assume now that $P_\sigma(\Delta_F -P_{\Phi}(F)\tau)=0$, and let $\nu_F\in
\M_\sigma(\tau)$ be an equilibrium measure for the potential $\Delta_F -P_{\Phi}(F)\tau$. Then
\[
P_\sigma(\Delta_F -P_{\Phi}(F)\tau)= h_{\nu_F}(\sigma) +\int_\Sigma \left(\Delta_F -P_{\Phi}(F)\tau\right) \,d \nu_F = 0.
\]
Set $\mu_F= R(\nu_F)$. Since $\nu_F\in \M_\sigma(\tau)$ we have
$\int_\Sigma\tau \, d \nu_F <\infty$, and thus
\[
P_\Phi(F) = \frac{h_{\nu_F}(\sigma)}{\int_\Sigma\tau \, d \nu_F} +
\frac{\int_\Sigma \Delta_F\, d \nu_F}{\int_\Sigma\tau \, d \nu_F}
=h_{\mu_F}(\Phi) +\int_Y F \, d \mu_F.
\]
This shows that $\mu_F$ is an equilibrium measure for $F$. On the
other hand, if we start with an equilibrium measure $\mu_F$ for $F$,
then
\[
P_\Phi(F)= h_{\mu_F}(\Phi) +\int_Y F \, d \mu_F.
\]
The measure $\mu_F$ is obtained from a product measure $\nu_F\times
m$ for some $\nu_F\in \M_\sigma(\tau)$. Therefore, using Abramov's
formula,
\[
0 =P_\sigma(u_F)\ge h_{\nu_F}(\sigma) + \int_\Sigma \left( \Delta_F -P_{\Phi}(F)\tau \right) \, d
\nu_F=0.
\]
In particular, $\nu_F$ is an equilibrium measure for $u_F$. This
completes the proof.
\end{proof}

\begin{rem}
Note that every potential $\Delta_F-P_{\Phi}(F)\tau$ has a Gibbs measure $\mu \in \M_{\sigma}$ (see \cite[Theorem 1]{sa3}). Nevertheless, it is possible that this measure is not an equilibrium measure. Indeed,  if $h_{\sigma}(\mu)=\infty$ and $\int (\Delta_F-P_{\Phi}(F)\tau) d \mu = -\infty$ then $\mu$ is not an equilibrium measure.
\end{rem}

The following are examples in which  the conditions in part $(2)$ of Theorem \ref{faa}  fails.

\begin{eje}[No equilibrium measure 1]
Let $F:Y \to \R$ be the potential defined in Example \ref{nroot} then
\[ P_\sigma(\Delta_F -P_{\Phi}(F)\tau) <0. \]
Therefore, the potential $F$ does not have an equilibrium measure.
\end{eje}

\begin{eje}[No equilibrium measure 2]
Let $F:Y \to \R$ be a potential  such that $\Delta_F(x)=-\log(n(\log n)^2)$. Recall that the existence of such potential is guaranteed by  Remark \ref{exten}. By Remark \ref{formula}
we have that
\begin{equation*}
P((\Delta_F -t \tau)|\Sigma_F)= \log \sum_{n=6}^{\infty} \frac{1}{n^{1+2t}(\log n)^2}.
\end{equation*}
Therefore, we have
\[P((\Delta_F -P_{\Phi}(F) \tau)=0.\]
Moreover $P_{\Phi}(F) < 1$, indeed note that for $t=0$ the pressure function on the full-shift on $\N$, denoted by $\Sigma_\N$, is equal to zero. The result follows since $\Sigma_A \subset \Sigma_\N$.

The Gibbs measure $\mu \in \M_{\sigma}$ corresponding to the potential
$ \Delta_F -P_{\Phi}(F) \tau$, satisfies the following relation for a constant $C>0$ and for every natural number $n \geq 6$,
\[ \frac{1}{C}\leq \frac{\mu(C_n)}{n^{2P_{\Phi}(F)+1}(\log n)^2}\leq C. \]
Note that
 \begin{equation*}
\int \tau \ d \mu_{\phi} \geq \sum_{n=6}^{\infty} n^2 \mu(C_n) \geq \frac{1}{C}  \sum_{n=6}^{\infty} \frac{n^2}{n^{2P_{\Phi}(F)+1}(\log n)^2} = \frac{1}{C}  \sum_{n=6}^{\infty} \frac{1}{n^{2P_{\Phi}(F)-1}(\log n)^2} \end{equation*}
Since $2P_{\Phi}(F)-1<1$ we have that $\int \tau \ d \mu_{\phi}= \infty$.
\end{eje}

In the next Theorem we establish conditions under which a potential has an equilibrium measure. This condition is a \emph{small oscillation} type of condition.
\begin{teo}
Let $F \in \mathcal{P}$ be a bounded potential with $P_{\Phi}(F) < \infty$. If
\begin{equation} \label{va}
\sup F - \inf F < h_{top}(\Phi) - \frac{1}{2},
\end{equation}
then $F$ has an equilibrium measure $\mu_F    \in \M_{\Phi}$.
 \end{teo}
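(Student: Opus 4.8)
The plan is to deduce the statement from Theorem~\ref{faa}: it suffices to verify that (i) $P_\sigma(\Delta_F - P_{\Phi}(F)\tau)=0$, and (ii) the potential $\psi:=\Delta_F - P_{\Phi}(F)\tau$ admits an equilibrium measure $\nu_F\in\M_\sigma(\tau)$; Theorem~\ref{faa} then provides $\mu_F=R(\nu_F)\in\M_\Phi$. Both points will be forced by the hypothesis \eqref{va} combined with the boundedness of $F$, which yields the pointwise squeeze $(\inf F)\,\tau(x)\le\Delta_F(x)\le(\sup F)\,\tau(x)$, together with the coarse bound $2\log(cn_1)\le\tau(x)\le 2\log n_1$ (with $c=(3+\sqrt5)/6$, $n_1=x_0\ge 3$) and the identity $P_\Phi(0)=h_{top}(\Phi)$.

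For (i), I would first analyse the auxiliary function $g(s):=P_\sigma(-s\tau)$. Since $\Sigma_A$ has the BIP property and $\tau$ is locally H\"older, $g$ is convex and, where finite, real analytic and strictly decreasing. Using $2\log(cn_1)\le\tau\le 2\log n_1$ and comparing with the full shift on $\N$ as in Remark~\ref{formula}, one sees that $g\equiv+\infty$ on $(-\infty,1/2]$ while $g$ is finite on $(1/2,\infty)$, with $g(s)\to+\infty$ as $s\downarrow 1/2$ and $g(s)\to-\infty$ as $s\to\infty$; moreover $g(h_{top}(\Phi))=0$ since $P_\Phi(0)=h_{top}(\Phi)\ge 0.7771>1/2$. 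The squeeze then gives $P_\sigma(\Delta_F-t\tau)\ge g(t-\inf F)>0$ for every $t<h_{top}(\Phi)+\inf F$, whence $P_\Phi(F)\ge h_{top}(\Phi)+\inf F$. Feeding this into \eqref{va} yields
\[
P_\Phi(F)-\sup F\ \ge\ h_{top}(\Phi)-(\sup F-\inf F)\ >\ \tfrac12,
\]
so in particular $P_\Phi(F)>\tfrac12+\sup F$. On the half-line $t>\tfrac12+\sup F$ the upper bound $P_\sigma(\Delta_F-t\tau)\le g(t-\sup F)$ is finite, hence $t\mapsto P_\sigma(\Delta_F-t\tau)$ is finite, convex and continuous there; since the whole set $\{t:P_\sigma(\Delta_F-t\tau)\le 0\}$ lies in $[\,h_{top}(\Phi)+\inf F,\infty)\subset(\tfrac12+\sup F,\infty)$, continuity at $t=P_\Phi(F)$ forces $P_\sigma(\Delta_F-P_\Phi(F)\tau)=0$. (This is exactly the phenomenon that fails in Example~\ref{nroot}.)

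For (ii), note that $\psi=\Delta_F-P_\Phi(F)\tau$ is locally H\"older with $\sum_n V_n(\psi)<\infty$ and $P_\sigma(\psi)=0<\infty$, so by the BIP theorem of Subsection~\ref{cms} it has a Gibbs measure $\mu$. Using the Gibbs inequality with $P_\sigma(\psi)=0$, there is a uniform $C>0$ with $\mu(C_n)\le C\exp(\sup_{C_n}\psi)$; and since $\psi\le(\sup F-P_\Phi(F))\tau$ with $\sup F-P_\Phi(F)<-\tfrac12$ and $\tau\ge 2\log(cn_1)$, one gets $\sup_{C_n}\psi\le-\beta\log(cn)$ where $\beta:=2(P_\Phi(F)-\sup F)>1$. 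Hence $\mu(C_n)\lesssim n^{-\beta}$, so $\int\tau\,d\mu\le\sum_{n\ge 3}2(\log n)\,\mu(C_n)\lesssim\sum_{n\ge 3}(\log n)\,n^{-\beta}<\infty$, i.e.\ $\mu\in\M_\sigma(\tau)$. Then $\int\psi\,d\mu\ge(\inf F-P_\Phi(F))\int\tau\,d\mu>-\infty$, so $\int\psi\,d\mu$ is finite and, by the Gibbs property (equivalently, by the remark following Theorem~\ref{faa}, whose only obstruction to a Gibbs measure being an equilibrium measure is $\int\psi\,d\mu=-\infty$ with infinite entropy), $\mu$ is an equilibrium measure for $\psi$. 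Applying Theorem~\ref{faa} with $\nu_F=\mu$ gives the desired $\mu_F=R(\mu)\in\M_\Phi$.

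The main obstacle is the quantitative accounting around the number $\tfrac12$: the hypothesis \eqref{va} has to be turned, via $P_\Phi(0)=h_{top}(\Phi)$ and the estimates on $\tau$, into the single inequality $P_\Phi(F)-\sup F>\tfrac12$, which is then used twice — once to push $P_\Phi(F)$ strictly past the critical abscissa $\tfrac12+\sup F$ so that the defining equation for the pressure actually has a root, and once to ensure that $\beta=2(P_\Phi(F)-\sup F)>1$ so that the series controlling $\int\tau\,d\mu$ converges. Secondary technical points that need care are the local H\"older regularity of $\tau$ (so that the whole countable-Markov-shift apparatus of Subsection~\ref{cms} is available) and the boundary behaviour of $g$ at $s=\tfrac12$, in particular the fact that $h_{top}(\Phi)>\tfrac12$ so that $g(h_{top}(\Phi))=0$ genuinely holds rather than being merely a limiting value.
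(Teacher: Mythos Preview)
Your proof is correct. For part (i) you follow essentially the same squeeze-and-continuity argument as the paper: the paper picks $t^*\in(1/2+\sup F,\,h_{top}(\Phi)+\inf F)$ to see that $t\mapsto P_\sigma(\Delta_F-t\tau)$ is finite and positive somewhere, then invokes continuity; you instead first locate $P_\Phi(F)$ via the lower bound $P_\Phi(F)\ge h_{top}(\Phi)+\inf F$ and then use \eqref{va} to get $P_\Phi(F)>\tfrac12+\sup F$, which is the same mechanism unpacked.

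For part (ii) the routes genuinely diverge. The paper argues abstractly: having shown that $t\mapsto P_\sigma(\Delta_F-t\tau)$ is finite on a two-sided neighbourhood of $P_\Phi(F)$, it invokes real analyticity of the pressure (BIP theory) to conclude the derivative exists at $t=P_\Phi(F)$, and identifies this derivative with the integral against the Gibbs measure, forcing the integral to be finite. You instead work directly with the Gibbs inequality on one-cylinders, obtaining $\mu(C_n)\lesssim n^{-\beta}$ with $\beta=2(P_\Phi(F)-\sup F)>1$, and then sum $\sum(\log n)\,n^{-\beta}<\infty$ to get $\int\tau\,d\mu<\infty$. Your argument is more elementary and self-contained---it avoids the derivative formula for pressure and makes the role of the threshold $\tfrac12$ completely explicit through the exponent $\beta>1$---while the paper's argument is shorter and more structural, leaning on the analyticity machinery already quoted in Subsection~\ref{cms}. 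Both yield $\nu_F=\mu\in\M_\sigma(\tau)$ with $\int\psi\,d\mu>-\infty$, so Theorem~\ref{faa} applies.
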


\begin{proof}
Let us first show that $P_\sigma(\Delta_F -P_{\Phi}(g)\tau)=0$.
Denote by $s= \inf F$ and by $S=\sup F$. We have that $s\tau\le\Delta_F\le S\tau$. Moreover,
\[
P_\sigma((s-t)\tau) \le P_\sigma(\Delta_F -t\tau) \le
P_\sigma((S-t)\tau).
\]
Let $t^* \in \left(1/2 +S , h_{top}(\Phi)+s \right)$ (by equation \eqref{va} this interval is non degenerate). We have that
\[
0< P_\sigma((s-t^*)\tau) \le P_\sigma(\Delta_F -t^*\tau) \le
P_\sigma((S-t^*)\tau) < \infty.
\]
Since $P_{\Phi}(F) < \infty$ and by the continuity of the function $t \to  P_\sigma(\Delta_F -t\tau)$ we have that $P_\sigma(\Delta_F -P_{\Phi}(g)\tau)=0$.

It remains to show that the potential $\Delta_F -P_{\Phi}(F)\tau $ has an equilibrium measure. Recall that there exists a Gibbs measure $\mu$ associated to this potential. In order to show that this measure is an equilibrium measure it suffices to prove that
\begin{equation} \label{der}
\int \left(  \Delta_F -P_{\Phi}(F)\tau \right) \ d \mu < \infty.
\end{equation}
But note that \cite[Chapter 4]{PU}
\[ \frac{d}{dt}  P_\sigma(\Delta_F -t\tau) \Big{|}_{t=P_{\Phi}(F)}=  \int \left(  \Delta_F -P_{\Phi}(F)\tau \right) \ d \mu. \]
Note that we have proved that there exists an interval of the form $[P_{\phi}(F)-\epsilon, P_{\phi}(F)+\epsilon]$ where the function $t \to P_\sigma(\Delta_F -t\tau)$ is finite.
The result now follows, because when finite the function $t \to P_\sigma(\Delta_F -t\tau)$ is real analytic.
\end{proof}

It is interesting to remark that in a  wide range of different contexts similar small oscillation conditions on the potential have been imposed in order to prove existence and uniqueness of equilibrium measures. For instance, Hofbauer \cite{h} originally in a symbolic setting  and later Hofbauer and Keller   \cite{hk}
in the context of the angle doubling map on the circle, gave examples which shows that  this type of conditions was essential in their setting in order to have quasi-compactness of the transfer operator, and hence good equilibrium measures. Later, Denker and Urba\'nski \cite{du}, for rational maps, used similar conditions to prove uniqueness of equilibrium measures. Oliveira \cite{o} proves the existence of equilibrium measures for potentials satisfying similar conditions for  non-uniformly expanding maps.  Recently, Bruin and Todd \cite{bt} in the context of multimodal maps also made use of a similar condition to prove uniqueness of equilibrium measures.

\end{document}